\crefname{lem}{Lemma}{Lemmas}
\crefname{thm}{Theorem}{Theorems}
\crefname{cor}{Corollary}{Corollaries}
\crefname{prop}{Proposition}{Propositions}
\crefname{conj}{Conjecture}{Conjectures}
\crefname{openproblem}{Open Problem}{Open Problems}
\def\NAT@spacechar{~}
\def\thm@space@setup{\thm@preskip=\parskip\thm@postskip=0pt}
\let\oldproof\proof\let\endoldproof\endproof
\def\proof{\oldproof\unskip}\def\endproof{\endoldproof\unskip}
\DeclarePairedDelimiter{\floor}{\lfloor}{\rfloor}
\renewcommand{\le}{\leqslant}
\renewcommand{\geq}{\geqslant}
\renewcommand{\leq}{\leqslant}
\DeclareMathOperator{\dist}{dist}
\renewcommand{\thefootnote}{\fnsymbol{footnote}}
\theoremstyle{plain}
\newtheorem{thm}{Theorem}
\newtheorem{lem}[thm]{Lemma}
\newtheorem{cor}[thm]{Corollary}
\newtheorem{obs}[thm]{Observation}
\theoremstyle{definition}
\newcommand{\PP}{\mathcal{P}}
\newcommand{\QQ}{\mathcal{Q}}
\newcommand{\N}{\mathbb{N}}
\newcommand{\NN}{\mathbb{N}_0}
\newcommand{\defn}[1]{\textcolor{Maroon}{\emph{#1}}}
\def\longequation{$$\vcenter\bgroup\advance\hsize by -9em%
\noindent\ignorespaces\refstepcounter{equation}}%
\def\endlongequation{\egroup\eqno(\theequation)$$\global\@ignoretrue}
\begin{document}

\author{Vida Dujmovi{\'c}\,\footnotemark[1]
\qquad Louis Esperet\,\footnotemark[2]
\qquad Pat Morin\,\footnotemark[6]\\
\qquad Bartosz Walczak\,\footnotemark[4]
\qquad David~R.~Wood\,\footnotemark[5]}

\date{}

\footnotetext[1]{School of Computer Science and Electrical Engineering, University of Ottawa, Ottawa, Canada (\texttt{vida.dujmovic@uottawa.ca}). Research supported by NSERC and the Ontario Ministry of Research and Innovation.}

\footnotetext[2]{Laboratoire G-SCOP (CNRS, Univ.\ Grenoble Alpes), Grenoble, France
(\texttt{louis.esperet@grenoble-inp.fr}). Partially supported by ANR Projects GATO
(\textsc{anr-16-ce40-0009-01}) and GrR (\textsc{anr-18-ce40-0032}).}

\footnotetext[6]{School of Computer Science, Carleton University, Ottawa, Canada (\texttt{morin@scs.carleton.ca}). Research supported by NSERC.}

\footnotetext[4]{Department of Theoretical Computer Science, Faculty of Mathematics and Computer Science, Jagiellonian University, Krak\'ow, Poland (\texttt{walczak@tcs.uj.edu.pl}). Research partially supported by National Science Centre of Poland grant 2015/17/D/ST1/00585.}

\footnotetext[5]{School of Mathematics, Monash University, Melbourne, Australia (\texttt{david.wood@monash.edu}). Research supported by the Australian Research Council.}

\sloppy

\title{\textbf{Clustered 3-Colouring Graphs\\
of Bounded Degree}}

\maketitle


\begin{abstract}
A (not necessarily proper) vertex colouring of a graph has \defn{clustering} $c$ if every monochromatic component has at most $c$ vertices. We prove that planar graphs with maximum degree $\Delta$ are 3-colourable with clustering $O(\Delta^2)$. The previous best bound was $O(\Delta^{37})$. This result for planar graphs generalises to graphs that can be drawn on a surface of bounded Euler genus with a bounded number of crossings per edge. We then prove that graphs with maximum degree $\Delta$ that exclude a fixed minor are 3-colourable with clustering $O(\Delta^5)$. The best previous bound for this result was exponential in $\Delta$.
\end{abstract}

\renewcommand{\thefootnote}{\arabic{footnote}}

\section{Introduction}
\label{Introduction}

Consider a graph where each vertex is assigned a colour. A \defn{monochromatic component} is a connected component of the subgraph induced by all the vertices assigned a single colour. A graph $G$ is $k$-colourable with \defn{clustering} $c$ if each vertex can be assigned one of $k$ colours so that each monochromatic component has at most $c$ vertices. There have been several recent papers on clustered colouring \citep{NSSW19,vdHW18,KO19,CE19,EJ14,HST03,EO16,DN17,LO17,HW19,MRW17,LW1,LW2,LW3,NSW}; see \citep{WoodSurvey} for a survey. The general goal of this paper is to prove that various classes of graphs are 3-colourable with clustering bounded by a polynomial function of the maximum degree.

First consider clustered colouring of planar graphs. The 4-colour theorem \citep{AH89,RSST97} says that every planar graph is 4-colourable with clustering 1. This result is best possible regardless of the clustering value: for every integer $c$ there is a planar graph $G$ such that every 3-colouring of $G$ has a monochromatic component with more than $c$ vertices \citep{WoodSurvey,ADOV03,EJ14,KMRV97}. All known examples of such graphs have unbounded maximum degree. This led \citet*{KMRV97} to ask whether planar graphs with bounded maximum degree are 3-colourable with bounded clustering. This question was answered positively by \citet*{EJ14}.

Three colours is best possible for $\Delta\geq 6$, since the Hex Lemma~\citep{Gale79} implies that for every integer $c$, there is a planar graph $G$ with maximum degree 6 such that every 2-colouring of $G$ has a monochromatic component with more than $c$ vertices \citep{LMST08,MP08}. Furthermore, this degree threshold is best possible, since \citet*{HST03} proved that every graph with maximum degree 5 (regardless of planarity) is 2-colourable with clustering less than 20,000.

The following natural question arises: what is the least function $c(\Delta)$ such that every planar graph with maximum degree $\Delta$ has a 3-colouring with clustering $c(\Delta)$? The clustering function of \citet*{EJ14} was $\Delta^{O(\Delta)}$. While \citet*{EJ14} made no effort to optimise this function, exponential dependence on $\Delta$ is unavoidable using their method. Recently, \citet*{LW1} improved this bound to $O(\Delta^{37})$. A primary contribution of this paper  (\cref{3ColourPlanar}) is to improve it further to $O(\Delta^2)$. 

Like the above-mentioned works of \citet*{EJ14} and \citet*{LW1}, our theorem generalises to graphs with bounded Euler genus\footnote{The \textit{Euler genus} of the orientable surface with $h$ handles is $2h$. The \textit{Euler genus} of the non-orientable surface with $c$ cross-caps is $c$. The \textit{Euler genus} of a graph $G$ is the minimum integer $k$ such that $G$ embeds in a surface of Euler genus $k$. Of course, a graph is planar if and only if it has Euler genus 0; see \citep{MoharThom} for more about graph embeddings in surfaces.\newline
A graph $H$ is a \textit{minor} of a graph $G$ if a graph isomorphic to $H$ can be obtained from a subgraph of $G$ by contracting edges. A class $\mathcal{G}$ of graphs is \defn{minor-closed} if for every graph $G\in\mathcal{G}$, every minor of $G$ is in $\mathcal{G}$.
A minor-closed class is \defn{proper} if it is not the class of all graphs. For example, for fixed $g\geq 0$, the class of graphs with Euler genus at most $g$ is a proper minor-closed class.\newline
A graph $H$ is \defn{apex} if $H-v$ is planar for some vertex $v$.}. In particular, we prove  (in \cref{3ColourGenus}) that graphs with Euler genus $g$ and maximum degree $\Delta$ are 3-colourable with clustering $O(g^3\Delta^2)$. The previous best clustering function was $O(g^{19}\Delta^{37})$ due to \citet*{LW1}. In fact, our result and that of \citet*{LW1} hold in the more general setting of bounded layered treewidth (defined in \cref{LayeredTreewidth}). This enables further generalisations. For example, we prove (in \cref{3ColourApex}) that apex-minor-free graphs are 3-colourable with clustering $O(\Delta^2)$, and graphs that have a drawing on a surface of bounded Euler genus with a bounded number of crossings per edge are 3-colourable with clustering $O(\Delta^2)$. All these results are presented in \cref{Planar}.

\cref{Minors} focuses on clustered colouring of graphs excluding a fixed minor. For $K_t$-minor-free graphs, at least $t-1$ colours are needed regardless of the clustering function; that is, for every integer $c$ there is a $K_t$-minor-free graph $G$ such that every $(t-2)$-colouring of $G$ has a monochromatic component with more than $c$ vertices \citep{WoodSurvey,EKKOS15}. Again, all such examples have unbounded maximum degree. Indeed, in the setting of bounded degree graphs, qualitatively different behaviour occurs. In particular, \citet*{LO17} proved that bounded degree graphs excluding a fixed minor are 3-colourable with bounded clustering (thus generalising the above result of \citet*{EJ14} for planar graphs and graphs of bounded Euler genus).

\citet*{LO17} did not state an explicit bound on the clustering function, but it is at least exponential in the maximum degree\footnote{Chun-Hung Liu [private communication, 2020] believes that the method in \citep{LO17} could be adapted to give a polynomial bound using more advanced graph structure theorems.}. We prove (in \cref{3colMinor}) that graphs with maximum degree $\Delta$ that exclude a fixed minor are 3-colourable with clustering $O(\Delta^5)$. The proof of this result is much simpler than that of \citet*{LO17}, and is based on a new structural description of bounded-degree graphs excluding a minor that is of independent interest (\cref{MinorFreeDeltaLayeredPartition,MinorFreeDegreeStructure}).

Bounded maximum degree alone is not enough to ensure an absolute bound (independent of the degree) on the number of colours in a clustered colouring. In particular, for all integers $\Delta\geq 2$ and $c$ there is a graph $G$ with maximum degree $\Delta$ such that every $\floor{\frac{\Delta+2}{4}}$-colouring of $G$ has a monochromatic component with more than $c$ vertices; see \citep{ADOV03,HST03,WoodSurvey}. This says that in all of the above results, to achieve an absolute bound on the number of colours, one must assume some structural property (such as bounded treewidth, being planar, or excluding a minor) in addition to assuming bounded maximum degree.

To conclude our literature survey, we mention the results of \citet*{LW1,LW2,LW3} that generalise the bounded degree setting. First, \citet*{LW1} proved that for all $s,t,k\in\N$ there exists $c\in\N$ such that every graph with layered treewidth $k$ and with no $K_{s,t}$ subgraph is $(s+2)$-colourable with clustering $c$. The case $s=1$ is equivalent to the bounded degree setting; thus this result generalises the above-mentioned 3-colouring results for graphs with bounded maximum degree. For $s\geq 2$, the clustering function here is very large, and the proof is 70+ pages long. In the setting of excluded minors, \citet*{LW2} proved that for all $s,t\in\N$ and for every graph $H$ there is an integer $c$ such that every graph with no $H$-minor and with no $K_{s,t}$-subgraph is $(s+2)$-colourable with clustering $c$. Similar results are obtained for excluded topological minors \citep{LW3}. 

\section{Planar Graphs and Generalisations}
\label{Planar}

This section proves that planar graphs with maximum degree $\Delta$ (and other more general classes) are 3-colourable with clustering $O(\Delta^2)$. Let $\N:=\{1,2,\dots\}$ and $\NN:=\{0,1,\dots\}$. 

\subsection{Treewidth}

Tree decompositions and treewidth are used throughout this paper.
For two graphs $G$ and $H$, an \defn{$H$-decomposition} of $G$ consists of a collection $(B_x : x\in V(H))$ of subsets of $V(G)$, called \defn{bags}, indexed by the nodes of $H$, such that:
\begin{compactitem}
\item for every vertex $v$ of $G$, the set $\{x\in V(H) : v\in B_x\}$ induces a non-empty connected subgraph of $H$, and
\item for every edge $vw$ of $G$, there is a vertex $x\in V(H)$ for which $v,w\in B_x$.
\end{compactitem}
The \defn{width} of such an $H$-decomposition is $\max\{|B_x|:x\in V(H)\}-1$. A \defn{tree-decomposition} is a $T$-decomposition for some tree $T$. Tree decompositions were introduced by \citet*{Halin76} and \citet*{RS-II}. The more general notion of $H$-decomposition was introduced by \citet*{DK05}. The \defn{treewidth} of a graph $G$ is the minimum width of a tree-decomposition of $G$. Treewidth measures how similar a given graph is to a tree. It is particularly important in structural and algorithmic graph theory; see \citep{HW17,Reed97,Bodlaender-TCS98} for surveys.

Our first tool, which was also used by \citet*{LW1}, is the following 2-colouring result for graphs of bounded treewidth due to \citet*{ADOV03}. The constant 20 comes from applying a result from \citep{Wood09}.

\begin{lem}[\citep{ADOV03}]
\label{2Colour}
Every graph with maximum degree\/ $\Delta\in\N$ and treewidth less than\/ $k\in\N$ is\/ $2$-colourable with clustering\/ $20k\Delta$.
\end{lem}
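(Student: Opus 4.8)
The plan is to deduce \cref{2Colour} from the theory of \emph{tree-partitions}. A \emph{tree-partition} of a graph $G$ is a pair $(T,(B_x:x\in V(T)))$ in which $T$ is a tree and $(B_x:x\in V(T))$ is a partition of $V(G)$ into (possibly empty) parts, such that for every edge $vw$ of $G$ either some part $B_x$ contains both $v$ and $w$, or there is an edge $xy$ of $T$ with $v\in B_x$ and $w\in B_y$; its \emph{width} is $\max\{|B_x|:x\in V(T)\}$. The single non-elementary ingredient is the theorem of Ding and Oporowski, in the quantitatively sharpened form due to \citet{Wood09}, that every graph with treewidth less than $k$ and maximum degree $\Delta$ has a tree-partition of width at most $20k\Delta$; applying it is exactly where the constant $20$ and the dependence on $\Delta$ in \cref{2Colour} come from.

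So I would fix such a tree-partition $(T,(B_x:x\in V(T)))$ of $G$, root $T$ at an arbitrary node $r$, and for each vertex $v$ let $x_v$ denote the unique node of $T$ with $v\in B_{x_v}$. Colour each vertex $v$ with $\dist_T(r,x_v)\bmod 2\in\{0,1\}$. The claim to verify is that every monochromatic component is contained in a single part $B_x$, which immediately yields clustering at most $\max\{|B_x|\}\le 20k\Delta$. To see this, suppose $u$ and $v$ are adjacent in $G$ and receive the same colour; then $\dist_T(r,x_u)$ and $\dist_T(r,x_v)$ have the same parity. By the defining property of a tree-partition, either $x_u=x_v$ or $x_ux_v\in E(T)$; but in the latter case $x_u$ and $x_v$ are adjacent in the rooted tree $T$ and so their distances to $r$ differ by exactly $1$, contradicting the equal parity. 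Hence $x_u=x_v$. Now if $M$ is a monochromatic component, then applying this fact to consecutive vertices along a path in $M$ shows that $x_v$ is the same node for all $v\in V(M)$, i.e.\ $V(M)\subseteq B_x$ for that common node $x$.

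I expect essentially all of the difficulty to reside in the black-boxed tree-partition width bound of Ding--Oporowski and \citet{Wood09}; the colouring step is then the short parity argument above. It is worth recording why bounded treewidth alone does not suffice here: the fan (a path together with one dominating vertex) has treewidth $2$ but unbounded tree-partition width, so a maximum-degree hypothesis is genuinely needed, and it is precisely this hypothesis that is responsible for the factor of $\Delta$ in the clustering bound.
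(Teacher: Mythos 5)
Your proposal is correct and is essentially the paper's own argument: the paper invokes the Ding--Oporowski/\citet{Wood09} result in the form $G\subseteq T\boxtimes K_{20k\Delta}$ (equivalently, a tree-partition of width at most $20k\Delta$) and then properly $2$-colours the tree $T$, which is exactly your parity-of-depth colouring. The verification that each monochromatic component lies in a single bag is the same short step in both.
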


As an aside, it follows from the Lipton--Tarjan separator theorem~\citep{LT79} that $n$-vertex planar graphs have treewidth $O(\sqrt{n})$. Thus \cref{2Colour} implies that $n$-vertex planar graphs with maximum degree $\Delta\in\N$ are 2-colourable with clustering $O(\Delta\sqrt{n})$, which answers an open problem raised by \citet*{LMST08}. The same result holds for graphs excluding any fixed minor, using the separator theorem of \citet*{AST90}.

\subsection{Key Lemma}

The next lemma is a central result of the paper. Here, a \defn{layering} of a graph $G$ is an ordered partition $(V_0,V_1,\dots)$ of $V(G)$ such that for every edge $vw\in E(G)$, if $v\in V_i$ and $w\in V_j$, then $|i-j| \leq 1$. For example, if $r$ is a vertex in a connected graph $G$ and $V_i:=\{v\in V(G):\dist_G(r,v)=i\}$ for all $i\in\NN$, then $(V_0,V_1,\dots)$ is called a \textit{BFS layering} of $G$. The lemma assumes that for some layering of a graph, every subgraph induced by a bounded number of consecutive layers has bounded treewidth. This property dates to the seminal work of \citet*{Baker94}, who used it to obtain efficient approximation algorithms for various NP-hard problems on planar graphs. We show that graphs that satisfy this property and have small maximum degree are 3-colourable with small clustering.

\begin{lem}
\label{3Colour}
Let\/ $G$ be a graph with maximum degree\/ $\Delta\in\N$.
Let\/ $(V_0,V_1,\dots)$ be a layering of\/ $G$ such that\/ $G[\bigcup_{j=0}^{10}V_{i+j}]$ has treewidth less than\/ $k\in\N$ for all\/ $i\in\NN$. Then\/ $G$ is\/ $3$-colourable with clustering\/ $8000 k^3\Delta^2$.
\end{lem}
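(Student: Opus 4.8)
The plan is to group the layers into blocks and apply \cref{2Colour} to each block, then combine the 2-colourings of the blocks using a third colour on a carefully chosen sparse set of ``separating'' layers. Concretely, partition $\NN$ into consecutive intervals of length $11$, say $I_0 = \{0,\dots,10\}$, $I_1 = \{11,\dots,21\}$, and so on, and for each $t$ let $G_t := G[\bigcup_{i\in I_t} V_i]$. By hypothesis each $G_t$ has treewidth less than $k$, so by \cref{2Colour} each $G_t$ has a $2$-colouring (using colours $1$ and $2$, say) with clustering $20k\Delta$. The only edges of $G$ not captured inside some $G_t$ are those between $V_{11t+10}$ and $V_{11(t+1)}$, i.e.\ the edges crossing consecutive blocks; a monochromatic component of $G$ under the union of these block-colourings could ``leak'' across block boundaries through such edges. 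The fix is to recolour, with colour $3$, the vertices in one layer near each block boundary — but doing this naively only bounds the component sizes of colour $3$ by the size of a monochromatic component, which is unbounded.

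So the second, more delicate, idea is needed: within each block $G_t$, which has treewidth $<k$, we do not just $2$-colour it, but exploit its path-like (chain-of-blocks) structure of $G$ together with the tree-decomposition of $G_t$ to further break up long monochromatic components. The standard trick here (cf.\ the bounded-treewidth arguments behind \cref{2Colour} and \citep{ADOV03,Wood09}) is: in a graph of treewidth $<k$, there is a partition of the vertex set into $O(1)$ parts each of which, together with the tree-decomposition, can be ``sliced'' so that each monochromatic component meets few bags. I would instead argue more directly on $G$ itself: combine the layering $(V_i)$ with the block $2$-colourings to get a colouring of $G$ in which every monochromatic component is contained in a single block $G_t$ (after recolouring the boundary layers) — this already gives clustering tied to components of $G_t$ — and then recurse/refine. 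The cleanest route: first recolour a sparse periodic set of layers (one layer out of every $11$, chosen to line up with block boundaries) with colour $3$; this disconnects $G$ into the pieces $G_t' := G[\bigcup_{i\in I_t'} V_i]$ where $I_t'$ has $10$ layers, each still of treewidth $<k$; apply \cref{2Colour} to each $G_t'$ with colours $1,2$ to get clustering $20k\Delta$ on colours $1,2$; and the colour-$3$ class is a disjoint union of single layers $V_j$, each inducing a graph of treewidth $<k$ (being an induced subgraph of some $G_t$), so by \cref{2Colour} again... — but colour $3$ must stay a single colour. Hence instead, make the colour-$3$ set be a single layer $V_j$ for $j$ in a periodic set, and observe that $G[V_j]$ has maximum degree $\Delta$ and treewidth $<k$; a component of $G[V_j]$ can still be large.

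The genuine resolution, which I expect to be the heart of the proof and the main obstacle, is to choose the colour-$3$ set adaptively rather than as a fixed layer: inside each window of consecutive layers, pick the colour-$3$ vertices to be a ``small separator in the BFS-tree sense'' so that the colour-$3$ class is itself a union of small pieces. Specifically, within a block one uses the tree-decomposition of width $<k$ to select, greedily along the tree, a set $S$ of vertices meeting every sufficiently long path, with $|S|$ controlled and with $G - S$ having all components of bounded ``tree-diameter,'' then $2$-colours $G-S$ by \cref{2Colour} and recurses on $S$ (which has treewidth $<k$ and can be handled with the same two colours reused, since its components are small). Tracking the parameters: each application of \cref{2Colour} costs a factor $\sim 20k\Delta$, the windowing across $11$ layers costs a factor absorbed into constants, the separator selection in treewidth $<k$ costs another factor $O(k)$, and the degree-$\Delta$ bound converts ``bounded number of bags met'' into ``bounded number of vertices'' at a cost of $O(\Delta)$ per level; multiplying these through three colour classes gives the stated $8000\,k^3\Delta^2$. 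The bookkeeping to get the exact constant $8000$ (and to confirm only $k^3$ and $\Delta^2$, not higher powers) is the routine-but-careful part; the conceptual obstacle is arranging that the third colour class decomposes into small components \emph{and} that removing it leaves treewidth-$<k$ pieces to which \cref{2Colour} applies — i.e.\ choosing the separator layers/vertices so both properties hold simultaneously.
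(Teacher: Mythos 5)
Your setup (blocking the layers, applying \cref{2Colour} to each block, and worrying about the interface between blocks) matches the paper's starting point, and you correctly identify the central difficulty: a naively chosen third colour class near the block boundaries would inherit the unbounded size of monochromatic components of $G$. But your proposed resolution does not close this gap. The ``adaptive separator'' idea --- choosing $S$ greedily along the tree-decomposition so that $S$ meets every long path, $G-S$ has bounded tree-diameter, and $S$ itself induces only small components --- is exactly the statement that needs proving, and you give no construction for it; indeed it is circular, since controlling the components of $G[S]$ is the same problem you started with, one level down. The suggestion to ``recurse on $S$ reusing the same two colours'' also fails as stated, because reusing colours $1,2$ on $S$ lets monochromatic components of $G-S$ merge through $S$.

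The paper's actual mechanism, which is absent from your proposal, is twofold. First, a rotating colour scheme: block $G_i$ (five consecutive layers) is $2$-coloured with the colour pair $\{\overline{i},\overline{i+1}\}$ taken mod $3$, so consecutive blocks share exactly one colour and every colour class is used on large regions --- there is no designated ``separator colour.'' Second, and crucially, the boundary regions are handled by \emph{contracting} each monochromatic component $X$ of the first-round colouring that protrudes into the boundary zone into a single vertex $v_X$. Since the neighbours of $v_X$ lie inside a single monochromatic component of size at most $20k\Delta$, the contracted graph $Z_i$ has maximum degree at most $20k\Delta$; being a minor of the union of $11$ consecutive layers, it still has treewidth less than $k$, so \cref{2Colour} applies a second time with the inflated degree bound to give clustering $400k^2\Delta$, and un-contracting multiplies by a further $20k\Delta$ to give $8000k^3\Delta^2$. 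This contraction step is the key idea your proposal is missing: it converts ``large monochromatic component of the first colouring'' into ``vertex of bounded degree in a new bounded-treewidth graph,'' which is what lets the same $2$-colouring lemma be reused rather than requiring any new separator machinery.
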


\begin{proof}
No attempt is made to improve the constant 8000. 
We may assume (by renaming the layers) that $V_0=V_1=V_2=V_3=V_4=\emptyset$. 

Let $\overline{i}:=i\bmod{3}$ for $i\in\NN$.
As illustrated in \cref{ProofIllustration}, for $i\in\NN$, let $G_i$ be the induced subgraph $G[V_{6i}\cup V_{6i+1}\cup \dots \cup V_{6i+4}]$. Thus $G_i$ has maximum degree at most $\Delta$ and treewidth less than $k$. By \cref{2Colour}, $G_i$ has a 2-colouring $c_i$ with clustering $20k\Delta$. Use colours $\overline{i}$ and $\overline{i+1}$ for this colouring of $G_i$. We now define the desired colouring $c$ of $G$. Vertices in $V_{6i}\cup V_{6i+1}$ coloured $\overline{i}$ in $c_i$ keep this colour in $c$.
Vertices in $V_{6i+2}$ keep their colour from $c_i$ in $c$.
Vertices in $V_{6i+3}\cup V_{6i+4}$ coloured $\overline{i+1}$ in $c_i$ keep this colour in $c$.
Other vertices are assigned a new colour, as we now explain.

\begin{figure}[!t]
\centering\includegraphics[width=\textwidth]{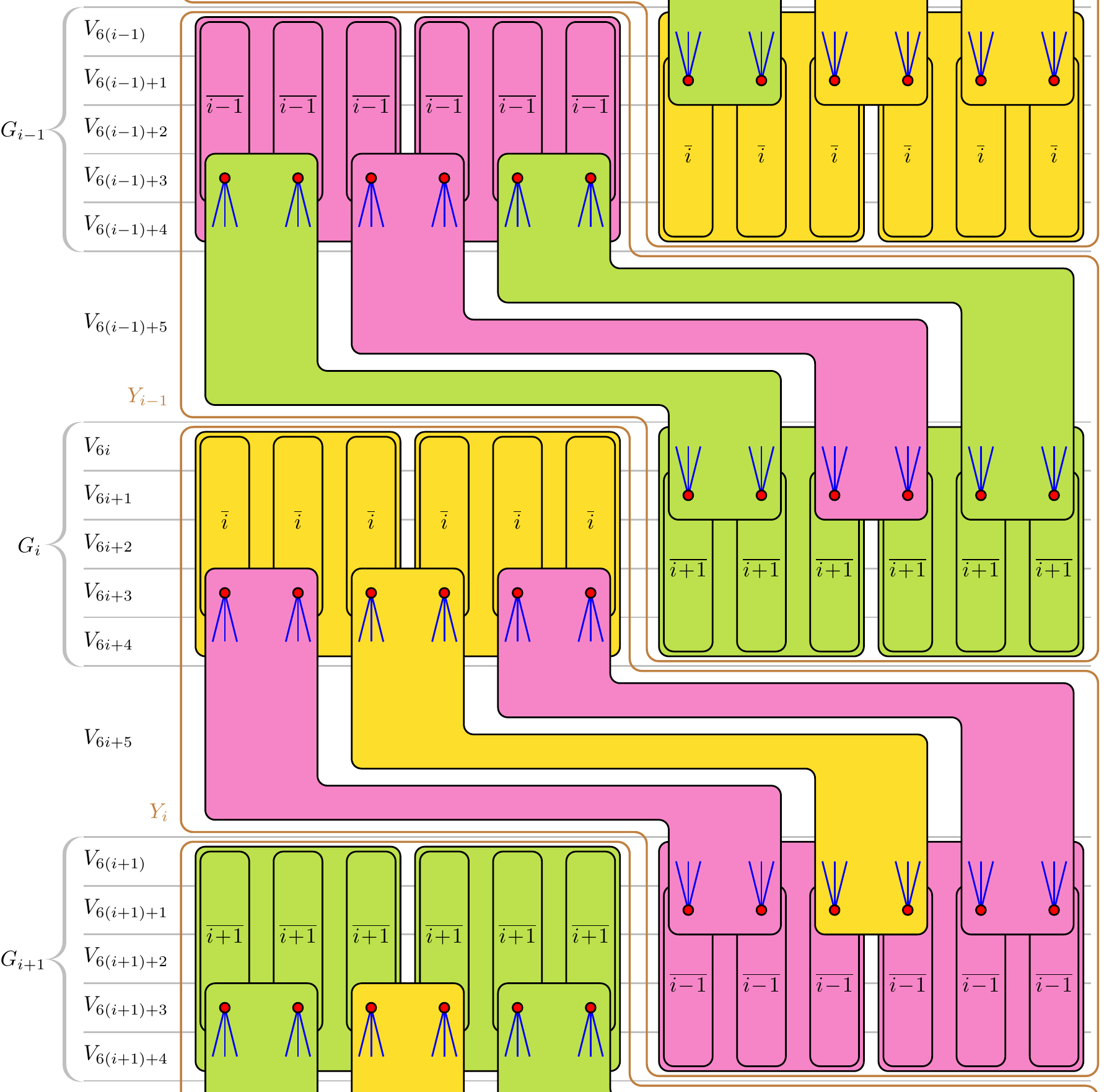}
\caption{\label{ProofIllustration} Proof of \cref{3Colour}.}
\end{figure}

For $j\in\NN$ and $\ell\in\{0,1,2\}$, let $V_{j,\ell}$ be the set of vertices in $V_j$ coloured $\ell$ in the colouring $c_i$ of the corresponding graph $G_i$ (which is well defined since $G_0,G_1,\dots$ are pairwise disjoint). For $i\in\NN$, let
\begin{align*}
A_i := \bigcup_{j=0}^3 V_{6i+j,\overline{i}}\quad, \quad
B_i := \bigcup_{j=1}^4 V_{6(i+1)+j,\overline{i-1}}\quad, \quad
Y_i := A_i \,\cup\, V_{6i+4,\overline{i}} \,\cup\, V_{6i+5} \,\cup\, V_{6(i+1),\overline{i-1}} \,\cup\, B_i.
\end{align*}
Note that $\{Y_i:i\in\NN\}$ is a partition of $V(G)$ (since $V_0=V_1=V_2=V_3=V_4=\emptyset$). 
In fact, $(Y_0,Y_1,\dots)$ is a layering of $G$, since $V_{6i-1}$ separates $Y_0\cup \dots\cup Y_{i-2}$ and $Y_i\cup Y_{i+1} \cup \cdots$.

For $i\in\NN$, let $Z_i$ be the graph obtained from $G[Y_i]$ as follows: for each component $X$ of $G[A_i]$ or $G[B_i]$, contract $X$ into a single vertex $v_X$. The neighbours of $v_X$ in $Z_i$ are contained within a monochromatic component of $G_i$ or $G_{i+1}$; thus $v_X$ has degree at most the size of the corresponding monochromatic component of $G_i$ or $G_{i+1}$, which is at most $20k\Delta$. Since $Z_i$ is a minor of $G[\bigcup_{j=0}^{10}V_{6i+j}]$ and treewidth is a minor-monotone parameter, $Z_i$ has treewidth less than $k$. By \cref{2Colour}, $Z_i$ has a 2-colouring $c_i'$ with clustering $400 k^2\Delta$. Use colours $\overline{i}$ and $\overline{i-1}$ for this colouring of $Z_i$.

We now assign colours to the remaining vertices of $G$ in the colouring $c$. Vertices in $V_{6i+4,\overline{i}} \cup V_{6i+5} \cup V_{6(i+1),\overline{i-1}}$ keep their colour from the colouring $c_i'$ of $Z_i$. Note that these vertices were not contracted in the construction of $Z_i$. For each component $X$ of $G[A_i]$, assign the colour given to $v_X$ in $c_i'$ to each vertex in $X\cap V_{6i+3}$. Similarly, for each component $X$ of $G[B_i]$, assign the colour given to $v_X$ in $c_i'$ to each vertex in $X\cap V_{6i+7}$. This completes the definition of the colouring $c$ of $G$.

Consider a monochromatic component $M$ in the 3-colouring $c$ of $G$. Suppose that $M$ contains an edge $vw$ with $v\in Y_{i-1}$ and $w\in Y_i$ for some $i\in\NN$. The only colour used by both $Y_{i-1}$ and $Y_i$ is $\overline{i-1}$; thus $M$ is coloured $\overline{i-1}$. But $V_{6i+2}$ does not use colour $\overline{i-1}$, and it separates $Y_{i-1}$ and $Y_i$. This contradiction shows that $M$ contains no such edge $vw$. Since $(Y_0,Y_1,\dots)$ is a layering of $G$ and $M$ is connected, $M$ is contained in some $Y_i$. The only colours used in $Y_i$ are $\overline{i}$ and $\overline{i-1}$. By symmetry we may assume that $M$ is coloured $\overline{i}$.

If $M$ is contained in $V_{6i} \cup V_{6i+1} \cup V_{6i+2}$, then $M$ is contained in some monochromatic component of $G_i$ (with respect to the colouring $c_i$), and thus $|V(M)| \leq 20k\Delta$. Otherwise, $M$ is contained in the graph obtained from a monochromatic component $C$ of $Z_i$ (with respect to the colouring $c_i'$) by replacing each contracted vertex $v_X$ in $C$ by $X$. Since $|V(C)| \leq 400 k^2\Delta$ and $|V(X)| \leq 20k\Delta$, we conclude that $|V(M)| \leq 8000 k^3\Delta^2$.
\end{proof}

See Appendix~A for a slightly stronger and slightly simpler version of \cref{3Colour} that was added after the paper was accepted to \emph{Combinatorics, Probability \& Computing}. 

\subsection{Layered Treewidth}
\label{LayeredTreewidth}

\citet*{DMW17} and \citet*{Shahrokhi13} independently introduced the following concept. The \defn{layered treewidth} of a graph $G$ is the minimum integer $k$ such that $G$ has a tree-decomposition $(B_x:x\in V(T))$ and a layering $(V_0,V_1,\dots)$ such that $|B_x\cap V_i|\leq k$ for every bag $B_x$ and layer $V_i$. Applications of layered treewidth include graph colouring \citep{DMW17,LW1,vdHW18}, graph drawing \citep{DMW17,BDDEW18}, book embeddings \citep{DF18}, boxicity \citep{SW20}, and intersection graph theory \citep{Shahrokhi13}. The related notion of layered pathwidth has also been studied \citep{DEJMW20,BDDEW18}. In a graph with layered treewidth $k$, the subgraph induced by the union of any 11 consecutive layers has treewidth less than $11k$. Thus \cref{3Colour} implies:

\begin{cor}
\label{3ColourLayeredTreewidth}
Every graph with layered treewidth\/ $k\in\N$ and maximum degree\/ $\Delta\in\N$ is\/ $3$-colourable with clustering\/ $O(k^3\Delta^2)$.
\end{cor}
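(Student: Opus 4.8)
The plan is to derive this directly from \cref{3Colour}, the heavy lifting having already been done there. The only thing to check is that in a graph of layered treewidth $k$, the subgraph induced by any eleven consecutive layers has treewidth less than $11k$; then \cref{3Colour} applies with $11k$ in place of $k$.

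For the treewidth bound I would use the elementary fact that tree-decompositions are closed under taking induced subgraphs: if $(B_x : x\in V(T))$ is a tree-decomposition of $G$ and $S\subseteq V(G)$, then $(B_x\cap S : x\in V(T))$ is a tree-decomposition of $G[S]$. So, given $G$ with layered treewidth $k$, fix a witnessing tree-decomposition $(B_x:x\in V(T))$ and layering $(V_0,V_1,\dots)$ with $|B_x\cap V_i|\le k$ for all $x\in V(T)$ and all $i$. For a fixed $i\in\NN$, let $S:=\bigcup_{j=0}^{10}V_{i+j}$. Intersecting each bag with $S$ gives a tree-decomposition of $G[S]$ in which every bag has size $|B_x\cap S|=\sum_{j=0}^{10}|B_x\cap V_{i+j}|\le 11k$, so $G[S]$ has treewidth at most $11k-1<11k$.

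The second and final step is to invoke \cref{3Colour} with the layering $(V_0,V_1,\dots)$ and with the treewidth parameter $11k$: this yields a 3-colouring of $G$ with clustering $8000\,(11k)^3\Delta^2 = 8000\cdot 1331\cdot k^3\Delta^2$, which is $O(k^3\Delta^2)$.

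There is essentially no obstacle; the only point needing a moment's care is the passage from layered treewidth to ordinary treewidth on eleven consecutive layers, and that is just the closure of tree-decompositions under induced subgraphs combined with the definition of layered treewidth. The rest is absorbing constants into the $O(\cdot)$ notation.
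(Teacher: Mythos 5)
Your proposal is correct and matches the paper's own derivation exactly: the paper also observes that any eleven consecutive layers induce a subgraph of treewidth less than $11k$ and then applies \cref{3Colour}. You have merely spelled out the restriction-of-bags argument that the paper leaves implicit.
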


This corollary improves on a result of \citet*{LW1} who proved an upper bound of $O(k^{19}\Delta^{37})$ on the clustering function.

Many classes of graphs are known to have bounded layered treewidth. For example, \citet*{DMW17} proved that every planar graph has layered treewidth at most 3, every graph with Euler genus $g$ has layered treewidth at most $2g+3$, and that any apex-minor-free class of graphs has bounded layered treewidth. \cref{3ColourLayeredTreewidth} thus implies the following results.

\begin{cor}
\label{3ColourPlanar}
Every planar graph with maximum degree\/ $\Delta\in\N$ is\/ $3$-colourable with clustering\/ $O(\Delta^2)$.
\end{cor}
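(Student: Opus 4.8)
The plan is to obtain this as an immediate consequence of \cref{3ColourLayeredTreewidth}, which already packages all of the colouring work: it says that a graph with layered treewidth $k$ and maximum degree $\Delta$ is $3$-colourable with clustering $O(k^3\Delta^2)$. So the only thing left to supply is a constant upper bound on the layered treewidth of planar graphs.

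For that I would cite the theorem of \citet*{DMW17} that every planar graph has layered treewidth at most $3$. (If one wanted a self-contained argument, the idea is to take a BFS layering $(V_0,V_1,\dots)$ from an arbitrary root, cut the graph along the BFS tree, and organise the resulting pieces into a tree-decomposition whose every bag meets every layer $V_i$ in at most $3$ vertices; but invoking the constant suffices here.) Substituting $k=3$ into \cref{3ColourLayeredTreewidth} then yields a $3$-colouring of every planar graph with maximum degree $\Delta$ having clustering $O(27\Delta^2)=O(\Delta^2)$, which is exactly the claimed bound.

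There is essentially no obstacle specific to this corollary: the substantive step was proving \cref{3Colour} (and hence \cref{3ColourLayeredTreewidth}), where one had to pass to the auxiliary minors $Z_i$, bound their maximum degree by $20k\Delta$ via the clustering of the $2$-colourings $c_i$, and use minor-monotonicity of treewidth. Once that is in hand, the planar case — and, by the same reasoning with different constants, the bounded-Euler-genus case (layered treewidth $2g+3$) and the apex-minor-free case (bounded layered treewidth) — follows purely by plugging in the known layered-treewidth bound. The only point to be mildly careful about is that any slack in the constant $3$, or in the constant $20$ of \cref{2Colour}, is harmless since it is absorbed into the $O(\cdot)$.
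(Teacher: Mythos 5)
Your proposal is correct and matches the paper's own derivation exactly: the paper obtains \cref{3ColourPlanar} by combining \cref{3ColourLayeredTreewidth} with the result of \citet{DMW17} that planar graphs have layered treewidth at most $3$. (The paper also sketches an alternative route via BFS layerings and the radius--treewidth bound of \citet{RS-III}, but that is presented only as a remark.)
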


\begin{cor}
\label{3ColourGenus}
Every graph with Euler genus\/ $g\in\NN$ and maximum degree\/ $\Delta\in\N$ is\/ $3$-colourable with clustering\/ $O(g^3\Delta^2)$.
\end{cor}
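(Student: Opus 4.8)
The plan is to deduce this immediately from \cref{3ColourLayeredTreewidth} together with the bound on layered treewidth for bounded-genus graphs quoted just before \cref{3ColourPlanar}. Recall that \citet*{DMW17} proved every graph with Euler genus $g$ has layered treewidth at most $2g+3$. So, given a graph $G$ with Euler genus $g\in\NN$ and maximum degree $\Delta\in\N$, set $k:=2g+3\in\N$; then $G$ has layered treewidth at most $k$, and \cref{3ColourLayeredTreewidth} gives a 3-colouring of $G$ with clustering $O(k^3\Delta^2)=O\bigl((2g+3)^3\Delta^2\bigr)=O(g^3\Delta^2)$, as required.

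Essentially nothing remains to check: all the real work is already encapsulated in \cref{3Colour} (and hence in \cref{3ColourLayeredTreewidth}), whose proof builds the 3-colouring layer-by-layer out of the 2-colouring of \cref{2Colour}. The only points to confirm are bookkeeping ones: that $2g+3$ is a positive integer (true since $g\in\NN$), that the union of any $11$ consecutive layers of the associated layering induces a subgraph of treewidth less than $11(2g+3)$ (this is exactly the inequality ``layered treewidth $k$ implies $11$ consecutive layers have treewidth less than $11k$'' stated in \cref{LayeredTreewidth}), and that $(2g+3)^3=O(g^3)$ as $g\to\infty$, with the case $g=0$ — planar graphs, i.e.\ \cref{3ColourPlanar} — being the degenerate instance where the additive constant dominates. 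I do not foresee any obstacle; the entire difficulty of the statement has been front-loaded into \cref{3Colour}.

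If one preferred an argument not invoking \citet*{DMW17}, the alternative is to feed \cref{3Colour} directly with a BFS layering $(V_0,V_1,\dots)$ of $G$ from an arbitrary root vertex (assuming $G$ connected, which is harmless). Each subgraph $G[\bigcup_{j=0}^{10}V_{i+j}]$ embeds in the same surface of Euler genus $g$ and, after identifying the ``outer'' layer to a single vertex, has radius at most a constant; since graphs of bounded Euler genus and bounded radius have treewidth $O(g)$ (bounded-genus graphs have linear local treewidth, in the sense of \citet*{Baker94}'s approach to planar-like classes), the treewidth hypothesis of \cref{3Colour} holds with $k=O(g)$, and the same conclusion follows. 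The one-line deduction via layered treewidth in the first paragraph is cleaner, so that is the route I would take.
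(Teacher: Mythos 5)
Your proof is correct and follows exactly the paper's route: the corollary is deduced from \cref{3ColourLayeredTreewidth} via the bound of \citet*{DMW17} that graphs of Euler genus $g$ have layered treewidth at most $2g+3$, and even your alternative argument (BFS layering plus the fact that bounded-genus graphs of bounded radius have treewidth $O(g)$, due to \citet*{Eppstein-Algo00}) is the same second derivation the paper sketches after \cref{3ColourApex}.
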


\begin{cor}
\label{3ColourApex}
For every fixed apex graph\/ $H$, every\/ $H$-minor-free graph with maximum degree\/ $\Delta\in\N$ is\/ $3$-colourable with clustering\/ $O(\Delta^2)$.
\end{cor}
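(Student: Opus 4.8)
The plan is to derive this corollary directly from \cref{3ColourLayeredTreewidth} together with the fact, already recorded in the discussion above, that every apex-minor-free class of graphs has bounded layered treewidth \citep{DMW17}. There is essentially no new work to do: the corollary is a substitution of one known input into another.

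Concretely, here are the steps. Fix the apex graph $H$. Since $H$ is apex, the class of $H$-minor-free graphs is an apex-minor-free class, so by the theorem of \citet{DMW17} there is a constant $k=k(H)$ such that every $H$-minor-free graph has layered treewidth at most $k$. Now let $G$ be an arbitrary $H$-minor-free graph with maximum degree $\Delta$. Applying \cref{3ColourLayeredTreewidth} to $G$ with this value of $k$ produces a $3$-colouring of $G$ with clustering $O(k^3\Delta^2)$; since $H$ is fixed, $k$ is an absolute constant, so this is $O(\Delta^2)$, as required. If one wants an explicit bound, tracing through \cref{3Colour} (using that $11$ consecutive layers of a graph with layered treewidth $k$ induce a subgraph of treewidth less than $11k$) gives clustering at most $8000\,(11k)^3\Delta^2$.

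The reduction itself is routine; all of the substance sits in the quoted input. If one instead wanted a self-contained argument, the main obstacle would be re-proving that apex-minor-free classes have bounded layered treewidth, and this is exactly where the Robertson--Seymour graph minor structure theorem is needed: an $H$-minor-free graph with $H$ apex admits a tree-decomposition whose torsos are almost-embeddable in a surface of bounded Euler genus with a bounded number of vortices of bounded width, and from the radial (BFS-type) structure of such an embedding one extracts a layering in which a bounded number of consecutive layers always induces a subgraph of bounded treewidth; these partial layerings must then be amalgamated coherently over the decomposition tree. Carrying this out is the content of \citep{DMW17}, so we simply cite it rather than repeat it. (One could equivalently phrase the hypothesis through Eppstein's characterisation of bounded local treewidth, but bounded layered treewidth is the form that plugs directly into \cref{3Colour}.)
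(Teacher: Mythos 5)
Your proof is correct and is exactly the paper's argument: cite \citet{DMW17} for the fact that apex-minor-free classes have bounded layered treewidth, then apply \cref{3ColourLayeredTreewidth}, with the explicit clustering bound $8000(11k)^3\Delta^2$ following from \cref{3Colour}. The paper also notes the alternative route via \citet{Eppstein-Algo00}'s diameter--treewidth result, which you mention in passing; no gap.
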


The above corollaries can also be deduced from \cref{3Colour} without considering layered treewidth. First consider a planar graph $G$, which we may assume is connected. Let $(V_0,V_1,\dots)$ be a BFS layering of $G$. For $i\in\NN$, let $G_i$ be obtained from $G[V_0\cup V_1\cup\dots\cup V_{i+10}]$ by contracting $G[V_0\cup \dots\cup V_{i-1}]$ (which is connected) into a single vertex.
Thus $G_i$ is planar and has radius at most 11.
\citet*{RS-III} proved that every planar graph with radius $d$ has treewidth at most $3d$.
Thus $G[V_i\cup \dots \cup V_{i+10}]$, which is a subgraph of $G_i$, has treewidth at most 33.
\cref{3ColourPlanar} then follows from \cref{3Colour}.
The same proof works in any minor-closed class for which the treewidth of any graph $G$ in the class is bounded by a function of the radius of $G$. For example, \citet*{Eppstein-Algo00} proved that every graph with Euler genus $g$ and radius $d$ has treewidth at most $O(gd)$. \cref{3ColourGenus} follows.
More generally, \citet*{Eppstein-Algo00} proved that for every apex graph $H$,
every $H$-minor-free graph with bounded radius has bounded treewidth. \cref{3ColourApex} follows.

Finally, note that one can also prove that every graph with Euler genus $g$ and maximum degree $\Delta$ is 3-colourable with clustering $O(g\Delta^6)$ using \cref{3Colour} and a result of \citet*{EJ14}\footnote{\citet*{EJ14} proved that if every plane graph with maximum degree $\Delta$ has a 3-colouring with clustering $f(\Delta)$, where one colour is not used on the outerface, then graphs with Euler genus $g$ and maximum degree $\Delta$ are 3-colourable with clustering $O(\Delta^2 f(\Delta)^2 g)$. Now, let $G$ be a plane graph with maximum degree $\Delta$. Let $G^+$ be the plane graph obtained by adding one new vertex $r$ adjacent to the vertices on the outerface of $G$. For $i\in\NN$, let $V_i$ be the set of vertices in $G^+$ at distance $i$ from $r$ in $G^+$. By the above contraction argument, $(V_1,\dots,V_n)$ is a layering of $G$ such that any set of 11 consecutive layers induces a subgraph with bounded treewidth. By \cref{3Colour}, $G$ is 3-colourable with clustering $O(\Delta^2)$. Moreover, only two colours are used on $V_1$ and thus on the outerface of $G$. By the above-mentioned result of \citet*{EJ14} with $f(\Delta)=O(\Delta^2)$, all graphs with Euler genus $g$ and maximum degree $\Delta$ are 3-colourable with clustering $O(\Delta^6 g)$.}.

\subsection{Examples}

One advantage of considering layered treewidth is that several non-minor-closed classes of interest have bounded layered treewidth. We give three examples:

\textbf{\boldmath $(g,k)$-Planar Graphs:}
A graph is \defn{$(g,k)$-planar} if it has a drawing on a surface of Euler genus at most $g$ such that each edge is involved in at most $k$ crossings (with other edges). \citet*{DEW17} proved that every $(g,k)$-planar graph has layered treewidth $O(gk)$. \cref{3ColourLayeredTreewidth} implies
that every $(g,k)$-planar graph with maximum degree $\Delta$ is 3-colourable with clustering $O(g^3k^3\Delta^2)$. This improves on a result of \citet*{LW1} who proved an upper bound of $O(g^{19} k^{19}\Delta^{37})$ on the clustering function.

\textbf{Map Graphs:}
Map graphs are defined as follows. Start with a graph $G_0$ embedded in a surface of Euler genus $g$, with each face labelled a `nation' or a `lake', where each vertex of $G_0$ is incident with at most $d$ nations. Let $G$ be the graph whose vertices are the nations of $G_0$, where two vertices are adjacent in $G$ if the corresponding faces in $G_0$ share a vertex. Then $G$ is called a \defn{$(g,d)$-map graph}. A $(0,d)$-map graph is called a (plane) \defn{$d$-map graph}; see \citep{FLS-SODA12,CGP02} for example.
The $(g,3)$-map graphs are precisely the graphs of Euler genus at most $g$; see \citep{DEW17}. So $(g,d)$-map graphs generalise graphs embedded in a surface.
\citet*{DEW17} showed that every $(g,d)$-map graph has layered treewidth at most $(2g+3)(2d+1)$.
\cref{3ColourLayeredTreewidth} then implies that every $(g,d)$-map graph with maximum degree $\Delta$ is 3-colourable with clustering $O(g^3d^3\Delta^2)$. This improves on a result of \citet*{LW1} who proved an upper bound of $O(g^{19} d^{19}\Delta^{37})$ on the clustering function.

\textbf{Graph Powers:}
For $p\in\N$, the \defn{$p$-th power} of a graph $G$ is the graph $G^p$ with vertex set $V(G^p):=V(G)$, where $vw\in E(G^p)$ if and only if $\dist_G(v,w)\leq p$. It follows from the work of \citet*{DMW19b} that powers of graphs with bounded layered treewidth and bounded maximum degree have bounded layered treewidth. Here we give a direct proof with better bounds.

\begin{lem}
\label{PowerLayeredTreewidth}
If\/ $G$ is a graph with layered treewidth\/ $k\in\N$ and maximum degree\/ $\Delta\in\N$, then\/ $G^p$ has layered treewidth less than\/ $2pk\Delta^{\floor{p/2}}$.
\end{lem}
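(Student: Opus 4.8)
The plan is to build a tree-decomposition and layering of $G^p$ from a tree-decomposition $(B_x:x\in V(T))$ and a layering $(V_0,V_1,\dots)$ of $G$ with $|B_x\cap V_i|\le k$ for all $x$ and $i$. Write $s:=\floor{p/2}$. The case $p=1$ is trivial ($G^p=G$ has layered treewidth $k<2k$) and so is $\Delta\le 1$ (then $G^p=G$ as well), so assume $p\ge 2$ and $\Delta\ge 2$, hence $s\ge 1$.

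First, the layering of $G^p$: group the layers of $G$ into consecutive blocks of size $p$, putting $W_j:=V_{jp}\cup V_{jp+1}\cup\dots\cup V_{jp+p-1}$ for $j\in\NN$. If $vw\in E(G^p)$ then $\dist_G(v,w)\le p$, so the $G$-layers of $v$ and $w$ differ by at most $p$; a short calculation then shows that their block indices differ by at most $1$, so $(W_0,W_1,\dots)$ is a layering of $G^p$, and one checks easily that blocks of size $p$ are the smallest that work.

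Second, a useful sanity check is that the ``ball'' construction already gives \emph{a} tree-decomposition of $G^p$: keep the tree $T$ and set $B'_x:=\{v\in V(G):\dist_G(v,B_x)\le s\}$. The set $\{x:v\in B'_x\}$ is the union of the subtrees $\{x:u\in B_x\}$ over all $u$ with $\dist_G(u,v)\le s$, and this is nonempty and connected because following a shortest $v$–$u$ path yields a chain of vertices, each within distance $s$ of $v$, whose subtrees overlap consecutively and thus link $\{x:u\in B_x\}$ to $\{x:v\in B_x\}$. For an edge $vw$ of $G^p$, take a shortest $v$–$w$ path $z_0=v,\dots,z_m=w$ with $m\le p$: if $m\le 2s$ the midpoint $z_{\floor{m/2}}$ is within distance $s$ of both $v$ and $w$, so $v,w\in B'_x$ for any $x$ in the nonempty subtree $\{x:z_{\floor{m/2}}\in B_x\}$; and if $m=2s+1$ (possible only when $p$ is odd), the edge $z_sz_{s+1}$ works, its endpoints being within distance $s$ of $v$ and of $w$ respectively while their subtrees overlap, so a common node $x$ has $v,w\in B'_x$.

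The catch is that this ball decomposition need not meet the claimed bound: a bag $B'_x$ coming from a $B_x$ that spans many $G$-layers can put far more than $2pk\Delta^s$ vertices into a single block $W_j$, each of the $\le k$ bag-vertices per layer contributing a whole radius-$s$ ball. So I would refine it: blow up each node $x$ of $T$ into a path indexed by the $G$-layers that $B_x$ meets, so that the slices $B_x\cap V_i$ are handled one (or a few consecutive ones) at a time, attach to each node of this path the radius-$s$ ball around the corresponding slice(s), add vertices where needed so that each vertex of $G$ still occupies a connected subtree, and glue the blown-up paths consistently along the edges of $T$. A single slice $B_x\cap V_i$ has at most $k$ vertices, and the radius-$s$ ball around a $k$-element subset of one layer meets each $G$-layer in fewer than $k(1+\Delta+\dots+\Delta^s)<2k\Delta^s$ vertices; so, provided the connectivity repairs are kept under control, each new bag meets each $G$-layer in $O(k\Delta^s)$ vertices and hence each block $W_j$ of $p$ consecutive layers in fewer than $2pk\Delta^s$ vertices, giving layered treewidth less than $2pk\Delta^{\floor{p/2}}$. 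I expect the crux — and the main obstacle — to be precisely this last step: turning the blow-up into a bona fide tree-decomposition of $G^p$ while holding the per-block bag size below $2pk\Delta^{\floor{p/2}}$. The delicate point is the connectivity axiom, which forces one to fill in a vertex's occurrences along a blown-up path and restricts how the paths may be glued, and this must be arranged without inflating the $O(k\Delta^s)$-per-layer count.
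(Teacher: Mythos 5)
Your layering $(W_0,W_1,\dots)$ by blocks of $p$ consecutive layers and your ball decomposition $B'_x=\{v\in V(G):\dist_G(v,B_x)\le s\}$, where $s:=\floor{p/2}$, are exactly the paper's construction (there $B'_x=\bigcup_{v\in B_x}X_v$ with $X_v$ the radius-$s$ ball around $v$), and your verification of the two tree-decomposition axioms is correct --- if anything more careful than the paper's in the odd-$p$ case. The gap is in your final step: the ``catch'' you identify is not a catch, and the repair you sketch in its place is not carried out. You claim that a bag $B_x$ spanning many $G$-layers can put ``far more'' than $2pk\Delta^{s}$ vertices of $B'_x$ into a single block $W_j$ because each bag-vertex contributes a whole radius-$s$ ball. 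But the balls are local: $X_v$ can meet $W_j$ only if $v$ lies within distance $s$ of $W_j$, hence in one of at most $p+2s\le 2p$ consecutive layers $V_a$; so at most $2pk$ vertices of $B_x$ contribute anything to $B'_x\cap W_j$, each contributing fewer than $2\Delta^{s}$ vertices. Thus $|B'_x\cap W_j|<4pk\Delta^{\floor{p/2}}$ already for the unmodified ball decomposition --- within a factor of $2$ of the stated bound, and in any case $O(pk\Delta^{\floor{p/2}})$, which is all that the lemma's applications require. This locality count is precisely how the paper concludes (the paper organises it as a per-layer bound on $|B'_x\cap V_i|$ and sums over the $p$ layers of $W_j$).

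Having discarded the working construction on the strength of this mistaken count, you replace it with a blow-up of each tree node into a path of layer-slices, and you yourself flag that establishing the connectivity axiom for the blown-up decomposition is ``the main obstacle'' and leave it unresolved. As written, the proposal therefore does not contain a complete proof: the correct decomposition is abandoned, and the substitute is never shown to be a tree-decomposition of $G^p$. The fix is simply to delete the blow-up and append the locality count above to your second step.
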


\begin{proof}
The result is trivial if $\Delta=1$, so assume that $\Delta\geq 2$. Let $(V_1,V_2,\dots)$ be a layering of $G$ and let $(B_x: x\in V(T))$ be a tree-decomposition of $G$ such that $|V_i\cap B_x| \leq k$ for each $i\in\NN$ and $x\in V(T)$. For each vertex $v\in V(G)$, let $X_v:=\{ w\in V(G): \dist_G(v,w) \leq \floor{\frac{p}{2}}\}$. For each node $x\in V(T)$, let $B'_x:=\bigcup_{v\in B_x} X_v$.

We now prove that $(B'_x:x\in V(T))$ is a tree-decomposition of $G^p$. Consider a vertex $\alpha\in V(G^p)$. Since $\alpha\in X_v$ if and only if $v\in X_\alpha$, $$\{x\in V(T): \alpha \in B'_x\} = \bigcup_{v\in X_\alpha} \{ x\in V(T): v \in B_x \}.$$
Since $\{ x\in V(T): v \in B_x \}$ induces a connected subtree of $T$, and $X_\alpha$ induces a connected subgraph of $G$, it follows that $\{x\in V(T): \alpha \in B'_x\}$ also induces a connected subtree of $T$.
Now, consider an edge $\alpha\beta\in E(G^p)$. There is an edge $vw$ of $G$ (in the `middle' of a shortest $\alpha\beta$-path) such that $\alpha\in X_v$ and $\beta\in X_w$. Now $v,w\in B_x$ for some node $x\in V(T)$. By construction, $\alpha,\beta\in B'_x$. This shows that $(B'_x:x\in V(T))$ is a tree-decomposition of $G^p$.

For $i\in\NN$, let $W_i:= V_{ip}\cup V_{ip+1}\cup\dots\cup V_{(i+1)p-1}$. For each edge $\alpha\beta\in E(G^p)$, if $\alpha\in V_i$ and $\beta \in V_j$, then $|i-j|\leq p$. Thus if $\alpha\in W_{i'}$ and $\beta\in W_{j'}$, then $|i'-j'|\leq 1$. This shows that $(W_1,W_2,\dots)$ is a layering of $G^p$.
Since $|X_v| < 2\Delta^{\floor{p/2}}$ for each vertex $v\in V(G)$, for each node $x\in V(T)$ and $i\in\NN$, we have $|B'_x \cap V_i | < 2 k\Delta^{\floor{p/2}}$, implying $|B'_x \cap W_i | < 2 pk\Delta^{\floor{p/2}}$.
Therefore $G^p$ has layered treewidth less than $2pk\Delta^{\floor{p/2}}$.
\end{proof}

\cref{3ColourLayeredTreewidth,PowerLayeredTreewidth} imply that for every graph with layered treewidth $k$ and maximum degree $\Delta$, the $p$-th power $G^p$ (which has maximum degree less than $2\Delta^p$) is 3-colourable with clustering $O( k^3 \Delta^{3\floor{p/2} + 2p})$. For example, for every $(g,k)$-planar graph $G$ with maximum degree $\Delta$, the $p$-th power $G^p$ has a 3-colouring with clustering $O( g^3k^3 \Delta^{3\floor{p/2} + 2p})$.

\section{Excluded Minors}
\label{Minors}

This section shows that graphs excluding a fixed minor and with maximum degree $\Delta$ are 3-colourable with clustering $O(\Delta^5)$. The starting point is Robertson and Seymour's Graph Minor Structure Theorem, which we now introduce.

\subsection{Graph Minor Structure Theorem}

For a graph $G_0$ embedded in a surface, and a facial cycle $F$ of $G_0$ (thought of as a subgraph of $G_0$), an \defn{$F$-vortex} (relative to $G_0$) is an $F$-decomposition $(B_x\subseteq V(H):x\in V(F))$ of a graph $H$ such that $V(G_0\cap H)=V(F)$ and $x\in B_x$ for each $x\in V(F)$. 

For $k\in\NN$, a graph $G$ is \defn{$k$-almost embeddable} if for some set $A\subseteq V(G)$ with $|A|\leq k$ and for some $s\in\{0,\dots,k\}$, there are graphs $G_0,G_1,\dots,G_s$ such that:
\begin{compactitem}
\item $G-A = G_{0} \cup G_{1} \cup \cdots \cup G_s$,
\item $G_{1}, \dots, G_s$ are pairwise vertex-disjoint;
\item $G_{0}$ is embedded in a surface of Euler genus at most $k$,
\item there are $s$ pairwise vertex-disjoint facial cycles $F_1,\dots,F_s$ of $G_0$, and
\item for $i\in\{1,\dots,s\}$, there is an $F_i$-vortex $(B_x\subseteq V(G_i):x\in V(F_i))$ of $G_i$ (relative to $G_0$) of width at most $k$.
\end{compactitem}
The vertices in $A$ are called \defn{apex} vertices. They can be adjacent to any vertex in $G$.

It is not clear whether the class of $k$-almost embeddable graphs is hereditary, so it will be convenient to define a graph to be \defn{$k$-almost\/$^{\downarrow}\!$ embeddable} if it is an induced subgraph of some \defn{$k$-almost embeddable} graph.

In a tree-decomposition $(B_x : x\in V(T))$ of a graph $G$, the \defn{torso} of a bag $B_x$ is the graph obtained from $G[B_x]$ as follows: for every edge $xy\in E(T)$, add every edge $vw$ where $v,w\in B_x\cap B_y$.

The following graph minor structure theorem by \citet*{RS-XVI} is at the heart of graph minor theory.

\begin{thm}[\citep{RS-XVI}]
\label{GMST}
For every graph\/ $H$, there exists\/ $k\in\NN$ such that every graph\/ $G$ that does not contain\/ $H$ as a minor has a tree decomposition\/ $(B_x : x\in V(T))$ such that the torso\/ $G_x$ of\/ $B_x$ is\/ $k$-almost embeddable for each node\/ $x\in V(T)$.
\end{thm}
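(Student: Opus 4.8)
This is the main structural output of Robertson and Seymour's Graph Minors series; a full proof is far outside the scope of this paper, and the plan is to invoke it as a black box (citing \citet*{RS-XVI}). For orientation, here is the route such a proof takes.

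First I would reduce to the case $H=K_h$: since every graph $H$ is a minor of $K_{|V(H)|}$, it suffices to produce the claimed decomposition whenever $G$ has no $K_h$-minor. I would then build the global decomposition out of local structure around \emph{tangles}. Concretely, take the canonical tree-decomposition distinguishing all maximal tangles of sufficiently large order; its adhesion sets are small, so the task reduces to describing each torso, which carries a single dominant tangle.

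The heart of the argument is the \emph{local structure theorem}: if $G$ has no $K_h$-minor then, relative to a tangle of sufficiently large order, $G$ is, after deleting at most $k=k(h)$ apex vertices, close to being embedded in a surface of bounded Euler genus together with a bounded number of bounded-width vortices attached along pairwise disjoint faces. To get there I would first apply the Excluded Grid Theorem to extract a huge wall from the large treewidth forced by the tangle, then use the \emph{flat wall theorem} — a large wall in a $K_h$-minor-free graph contains a large flat subwall, embedded in a planar piece separated from the rest of $G$ by a bounded-size separator — and iterate this, carefully tracking how vortices and apex vertices accumulate, until the whole torso is seen to be $k$-almost embeddable. Finally I would check that the local descriptions glue together consistently along the canonical tree-decomposition, yielding the desired tree-decomposition of $G$ into $k$-almost embeddable torsos.

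The main obstacle, by a wide margin, is the local structure theorem itself — upgrading ``large flat wall, no $K_h$-minor'' to a genuine bounded-genus-plus-vortices-plus-apices picture. This is precisely what occupies the bulk of the Graph Minors papers, and there is no short path to it; the surrounding steps (reduction to $K_h$, producing walls via the Excluded Grid Theorem, and globalising via the tangle tree-decomposition) are comparatively routine once the local statement is in hand.
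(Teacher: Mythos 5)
The paper does not prove this statement at all: it is imported verbatim from Robertson and Seymour's Graph Minors XVI and cited as \citep{RS-XVI}, which is exactly what you propose to do, so your approach matches the paper's. Your orientation sketch of the underlying argument (tangles, the excluded grid and flat wall theorems, and the local-to-global gluing) is a fair summary of the literature proof, but none of it is needed here since the theorem is used purely as a black box.
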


In \cref{GMST}, we have $|B_x\cap B_y| \leq 8k$ for each edge $xy$ of $T$ because of the following lemma.

\begin{lem}[{\protect\citep[Lemma~21]{DMW17}}]
\label{CliqueSize}
Every clique in a\/ $k$-almost embeddable graph has size at most\/ $8k$.
\end{lem}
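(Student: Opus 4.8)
The plan is to bound the size of a clique $K$ in a $k$-almost embeddable graph $G$ by analysing how $K$ distributes among the apex set $A$, the embedded part $G_0$, and the vortices $G_1,\dots,G_s$. Write $G-A = G_0\cup G_1\cup\dots\cup G_s$ as in the definition, with $|A|\le k$ and $s\le k$. Since $A$ contributes at most $k$ vertices to $K$, it suffices to bound $|K\setminus A|$ by a function of $k$; we aim for $7k$ so that the total is at most $8k$. The vertices of $K\setminus A$ lie in $G-A$, and because $K$ is a clique, any two of them are adjacent in $G-A$, hence lie in the same part $G_i$ or are joined by an edge across two parts. But $G_0,G_1,\dots,G_s$ overlap only along the facial cycles $F_i$ (with $V(G_0\cap G_i)=V(F_i)$), and $G_1,\dots,G_s$ are pairwise disjoint, so there are no edges of $G-A$ between $G_i$ and $G_j$ for $1\le i<j\le s$, and no edge between $G_i$ and $G_0$ outside $V(F_i)$. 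Consequently the clique $K\setminus A$ is contained either in a single $G_i$ for some $i\ge 1$, or in $G_0$; in the former case the parts of $K\setminus A$ lying outside $V(F_i)$ all sit in one part, while the $V(F_i)\subseteq V(G_0)$ portion also lives in $G_0$.

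First I would handle the purely embedded case: a clique in $G_0$, which is embedded in a surface of Euler genus at most $k$. A clique of size $t$ means $K_t$ embeds in that surface, and the Euler genus of $K_t$ is $\lceil (t-3)(t-4)/12\rceil$ (Ringel–Youngs), which grows quadratically in $t$; hence a surface of Euler genus at most $k$ admits a clique of size only $O(\sqrt{k})$, in particular at most $2k+2$ or so for a crude bound. (A cleaner route avoiding the exact genus formula: $K_t$ has $\binom{t}{2}$ edges and $t$ vertices, and Euler's formula for a graph embedded in a surface of Euler genus $g$ gives $|E|\le 3|V|+3(g-2)$ when $|V|\ge 3$, so $\binom t2 \le 3t + 3(k-2)$, forcing $t=O(\sqrt k)$.) So any clique living entirely in $G_0$ has size at most, say, $3k$ for all $k\ge 1$ after a routine estimate.

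Next the vortex case: $K\setminus A$ is contained in some $G_i$ with an $F_i$-vortex $(B_x : x\in V(F_i))$ of width at most $k$, so every bag $B_x$ has at most $k+1$ vertices. Split $K\setminus A$ into $K_0 := K\cap V(F_i)$ (which also lies in $G_0$ and forms a clique there, hence has bounded size by the previous paragraph) and $K' := (K\setminus A)\setminus V(F_i)$. The key point is that $K'$, being a clique in $G_i$ all of whose vertices lie strictly inside the vortex, must be contained in a single bag: by the $F$-decomposition axioms, for each vertex $v\in K'$ the set $\{x : v\in B_x\}$ is a nonempty connected (hence path) subgraph of the cycle $F_i$, and for each edge $vw$ of the clique some bag contains both; a standard Helly-type argument for subpaths of a cycle — together with the fact that a vertex of $K'$ is not itself any node $x$ (those nodes have $x\in B_x$ and lie on $F_i$) — shows all these subpaths pairwise intersect, and one checks they then have a common node, so $K'\subseteq B_x$ for a single $x$, giving $|K'|\le k+1$. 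Combining, $|K\setminus A|\le |K_0| + |K'| \le 3k + (k+1)$, and adding $|K\cap A|\le k$ yields $|K|\le 5k+1\le 8k$ for $k\ge 1$ (and the statement is trivial for $k=0$).

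The main obstacle I anticipate is the Helly argument for subpaths of a cycle: subpaths of a cycle do \emph{not} in general satisfy the Helly property (three arcs can pairwise overlap with empty common intersection, e.g. three arcs each covering slightly more than a third of the cycle). So one must use more than pairwise intersection — specifically that \emph{every pair} of clique vertices lies in a \emph{common} bag, which is stronger, plus the structural fact that vertices of $K'$ are not nodes of $F_i$ so their bag-sets are proper subpaths. One clean way around it: if the union of the bag-sets of the vertices of $K'$ were all of $V(F_i)$, pick three of them whose subpaths cover the cycle but with no common node; then no single bag contains all three, but that is fine — we only need each \emph{pair} in a bag. Instead argue: were $K'$ not contained in one bag, there would be $v,w\in K'$ whose subpaths are disjoint, yet $vw\in E(G_i)$ needs a bag containing both — contradiction; so the subpaths pairwise intersect, and since they are proper subpaths of a cycle (each missing at least the nodes not equal to... ) they actually have a common point by the Helly property for arcs that are each \emph{strictly shorter than the whole circle and pairwise intersecting} fails too — so the safe statement is the direct one: pairwise-intersecting subpaths of a cycle, at least one of which is a proper subpath, have a common vertex. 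I would verify this elementary fact and cite it; it is the crux and deserves a careful half-page, but it is genuinely routine.
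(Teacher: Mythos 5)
The paper does not actually prove \cref{CliqueSize}: it is imported verbatim from \citep[Lemma~21]{DMW17}, so there is no in-paper argument to compare yours against, and your proposal must stand on its own. Its architecture (split the clique over the apex set, the embedded part $G_0$, and a single vortex $G_i$) is the natural one, but two of the quantitative steps are wrong. First, the surface bound: Euler's formula gives $(t-3)(t-4)\le 6k$ for a clique of size $t$ in $G_0$, i.e.\ $t\le(7+\sqrt{24k+1})/2$, and this is \emph{not} at most $3k$ for $k\in\{1,2\}$ --- $K_6$ embeds in the projective plane, so for $k=1$ the correct bound is $6$, not $3$. The clean uniform bound is $t\le 6k$ for $k\ge 1$, and with it your final sum in the vortex case becomes $k+6k+(k+1)=8k+1$, so even granting your vortex claim the arithmetic no longer closes.

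Second, and more seriously, the vortex step. Your ``safe statement'' --- pairwise-intersecting subpaths of a cycle, at least one of which is proper, have a common vertex --- is false, and is refuted by the very example you mention two sentences earlier: three arcs of a $6$-cycle, each consisting of three consecutive nodes, pairwise meet in single nodes yet have empty common intersection. So ``$K'$ lies in a single bag'' is not established, and indeed the conclusion $|K'|\le k+1$ is itself false for cycle-indexed decompositions: take $F=xyz$ a triangular face, six pairwise-adjacent vortex-interior vertices $a_1,a_2,b_1,b_2,c_1,c_2$, and bags $B_x=\{x,a_1,a_2,b_1,b_2\}$, $B_y=\{y,b_1,b_2,c_1,c_2\}$, $B_z=\{z,c_1,c_2,a_1,a_2\}$; this is an $F$-vortex of width $4$ whose interior contains a clique of size $6>k+1$. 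A correct replacement is to cut the cycle: adding one fixed bag $B_{x_0}$ to all the others turns the vortex into a path-decomposition of $G_i$ of width at most $2k+1$, and since every clique meets a tree-decomposition in a single bag this gives $|K'|\le 2k+2$. But then the three contributions sum to $k+6k+2k+2$, so you would still need to rebalance (for instance, exploit that in the vortex case $K\cap V(G_0)$ lies on the facial cycle $F_i$, so a vertex can be added inside that face to sharpen the surface bound) before reaching $8k$. As written, neither the Helly step nor the final count is correct.
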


We need the following slight strengthening of \cref{GMST}.

\begin{thm}
\label{GMST2}
For every graph\/ $H$, there exists\/ $k\in\N$ such that every graph\/ $G$ that does not contain\/ $H$ as a minor and has maximum degree at most\/ $\Delta\in\N$ has a tree decomposition\/ $(B_x : x\in V(T))$ such that for each node\/ $x\in V(T)$, the torso\/ $G_x$ of\/ $B_x$ is\/ $k$-almost\/$^{\downarrow}\!$ embeddable and has maximum degree less than\/ $8 k\Delta$.
\end{thm}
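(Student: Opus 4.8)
The plan is to start from the tree-decomposition $(B_x:x\in V(T))$ given by \cref{GMST}, where each torso $G_x$ is $k_0$-almost embeddable for some constant $k_0=k_0(H)$, and then modify it so that the torsos have bounded maximum degree while remaining almost embeddable (up to taking induced subgraphs, which is why the $\downarrow$ version is the right notion). The source of high degree in a torso $G_x$ is twofold: original edges of $G$ are fine (degree $\le\Delta$), but the ``virtual'' edges added across an adhesion set $B_x\cap B_y$ can blow up the degree. By \cref{CliqueSize} applied to the torso $G_y$ (in which $B_x\cap B_y$ is a clique), every adhesion set has size at most $8k_0$, so a single virtual clique contributes at most $8k_0-1$ to the degree of each of its vertices. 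The problem is that a vertex $v\in B_x$ may lie in many adhesion sets $B_x\cap B_y$ simultaneously.

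First I would bound, for a fixed node $x$, the number of neighbours $y$ of $x$ in $T$ for which a given vertex $v\in B_x$ lies in $B_x\cap B_y$. Each such $y$ corresponds to a ``branch'' of $T$ hanging off $x$, and since $v$ appears in all bags along the $x$–$y$ path, the pieces $G$ contributes to these branches are attached to $v$ (and the rest of the adhesion set). Here I would use the bounded degree of $G$: a vertex $v$ of degree $\le\Delta$ together with the rest of an adhesion set of size $\le 8k_0$ can separate off only boundedly many non-trivial pieces — more precisely, I would first clean up the tree-decomposition so that no adhesion set is contained in another at an adjacent node and no bag is contained in a neighbouring bag (a standard reduction: contract the edge of $T$ and take the union of bags, or delete redundant bags), after which each branch at $x$ that contributes a virtual edge at $v$ must contain a vertex of $G$ adjacent (in $G$) to $v$ or to one of the $\le 8k_0-1$ other vertices of that adhesion set. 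Counting these gives that $v$ lies in at most $8k_0\Delta$ adhesion sets at $x$, hence the degree of $v$ in the torso $G_x$ is at most $\Delta + 8k_0\Delta\cdot(8k_0-1) < 8k\Delta$ for $k:=(8k_0)^2$ (or any sufficiently large constant depending only on $H$).

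The remaining point is that these reductions to $T$ must preserve almost-embeddability of the torsos. Merging two adjacent bags $B_x,B_y$ replaces two torsos by the torso of $B_x\cup B_y$, which need not itself be $k_0$-almost embeddable; but it \emph{is} an induced subgraph of a graph obtained by pasting two $k_0$-almost embeddable graphs along a clique of size $\le 8k_0$, and such a pasting is $O(k_0)$-almost embeddable (absorb one side's apex set and, if necessary, one side's whole embedded part into the apex set of the other — this costs at most $8k_0$ extra apices). So after finitely many such merges the torsos are $k$-almost$^{\downarrow}$ embeddable for a constant $k=k(H)$, and they simultaneously have degree less than $8k\Delta$ by the count above. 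Enlarging $k$ once more if needed to absorb the constants from the clique-pasting step finishes the proof.

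I expect the main obstacle to be the bookkeeping in the second paragraph: making precise the ``branch counting'' argument that bounds how many adhesion sets a low-degree vertex can belong to, and ensuring the tree-decomposition normalisations (no nested adhesions, minimal bags) can be carried out without destroying the structure from \cref{GMST}. The clique-pasting closure of almost-embeddability is essentially folklore and should be quotable or quick to verify, so the real work is the degree bound.
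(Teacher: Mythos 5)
Your overall strategy (bound the adhesion size by \cref{CliqueSize}, then bound how many adhesion sets at a node $x$ can contain a given vertex $v$ using the degree bound on $G$) is the right one, and it is the paper's strategy too, but the counting step as you state it has a genuine gap. After your normalisations, all you can guarantee is that each branch at $x$ whose adhesion set contains $v$ has a vertex adjacent in $G$ to \emph{some} vertex of that adhesion set --- not necessarily to $v$ itself. Charging each such branch to the vertex $w$ of its adhesion set that has a neighbour in the branch, you get at most $\Delta$ branches per choice of $w$; but the set of possible $w$'s ranges over all vertices that co-occur with $v$ in some adhesion set at $x$, which is (up to a factor $8k_0$) exactly the torso degree of $v$ that you are trying to bound. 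The count is circular, and one can construct decompositions satisfying your normalisations (no nested adhesion sets at adjacent nodes, no bag contained in a neighbouring bag) in which $v$ sits in arbitrarily many pairwise incomparable adhesion sets $\{v,w_i\}$ while each branch attaches only to $w_i$. The missing idea is a stronger cleanup: the paper takes the decomposition from \cref{GMST} minimising $\sum_x|B_x|$, which forces \emph{every} vertex of \emph{every} adhesion set $B_x\cap B_y$ to have a $G$-neighbour strictly beyond $y$; since the branch pieces $V_{y,x}$ over distinct neighbours $y$ of $x$ are disjoint, $v$ then lies in at most $\deg_G(v)\le\Delta$ adhesion sets at $x$, each contributing fewer than $8k$ torso edges at $v$.

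Your second paragraph's repair of almost-embeddability under bag merging is also shakier than you suggest. A single clique-sum of two $k_0$-almost embeddable graphs can indeed be made $O(k_0)$-almost embeddable, but the number of merges your normalisation might require is not bounded in terms of $H$ alone, so the parameter can blow up along the way; and ``absorbing one side's whole embedded part into the apex set'' is not available, since that part is unbounded in size. The paper sidesteps all of this: its only modification to the decomposition is deleting a vertex from the bags on one side of an edge of $T$, which turns each affected torso into an induced subgraph of the old one --- and this is precisely why the statement is phrased with $k$-almost$^{\downarrow}$ embeddable torsos rather than $k$-almost embeddable ones. If you replace your normalisations by the minimisation of $\sum_x|B_x|$ and drop the merging entirely, your argument becomes the paper's proof.
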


\begin{proof}
Let $(B_x:x\in V(T))$ be a tree decomposition of $G$ such that each torso is $k$-almost$^{\downarrow}\!$ embeddable, and subject to this condition, $\sum_{x\in V(T)}|B_x|$ is minimum. This is well defined by \cref{GMST}.

Consider an edge $xy\in E(T)$. Let $T_{x,y}$ be the component of $T-xy$ containing $x$. Let $V_{x,y}:= \bigcup\{B_z\setminus B_y :z\in V(T_{x,y})\}$. Suppose for the sake of contradiction that some vertex $v\in B_x\cap B_y$ has no neighbour in $V_{y,x}$. Let $B'_z:= B_z\setminus\{v\}$ for each $z\in V(T_{y,x})$, and let $B'_z:=B_z$ for each $z\in V(T_{x,y})$. Since induced subgraphs of $k$-almost$^{\downarrow}\!$ embeddable graphs are $k$-almost$^{\downarrow}\!$ embeddable, $(B'_z:z\in V(T))$ is a tree decomposition of $G$ such that each torso is $k$-almost$^{\downarrow}\!$ embeddable. (This is the reason we define $k$-almost$^{\downarrow}\!$ embeddability.)\ Since $v\in B_y$, we have $|B'_y| <|B_y|$, implying $\sum_{z\in V(T)}|B'_z| < \sum_{z\in V(T)}|B_z|$. This contradicts the choice of $(B_x:x\in V(T))$. Hence every vertex in $B_x\cap B_y$ has a neighbour in $V_{y,x}$.

Consider a node $x\in V(T)$, a vertex $v\in B_x$, and some edge $vw$ of the torso $G_x$ that is not in $G[B_x]$. By definition of the torso, $v,w\in B_x\cap B_y$ for some edge $xy\in E(T)$. As shown above, there is an edge $vu$ in $G$ with $u\in V_{y,x}$; let $\phi_x(v,w) := (v,u)$.
Since $u\notin B_x$ and $|B_x\cap B_y|\leq 8k$ (by \cref{CliqueSize}), we have $|\phi_x^{-1}(v,u)|<8k$ (all the elements in the pre-image of $(v,u)$ with respect to $\phi_x$ are of the form $(v,z)$ with $z\in V_x\cap V_y$). Thus $\deg_{G_x}(v) < 8 k\deg_G(v)\leq 8 k \Delta$.
\end{proof}

Let $C_1=\{v_1,\dots,v_k\}$ be a $k$-clique in a graph $G_1$. Let $C_2=\{w_1,\dots,w_k\}$ be a $k$-clique in a graph $G_2$. Let $G$ be the graph obtained from the disjoint union of $G_1$ and $G_2$ by identifying $v_i$ and $w_i$ for $i\in\{1,\dots,k\}$, and possibly deleting some edges in $C_1$ ($=C_2$). Then $G$ is a \defn{clique-sum} of $G_1$ and $G_2$.

The following is a direct consequence of \cref{GMST2}.

\begin{cor}
\label{GMST3}
For every proper minor-closed class\/ $\mathcal{G}$, there exists\/ $k\in\N$ such that every graph\/ $G$ in\/ $\mathcal{G}$ with maximum degree at most\/ $\Delta\in\N$ is obtained by clique-sums of\/ $k$-almost\/$^{\downarrow}\!$ embeddable graphs of maximum degree less than\/ $8k\Delta$.
\end{cor}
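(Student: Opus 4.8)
The plan is to deduce \cref{GMST3} from \cref{GMST2} using the folklore equivalence between tree decompositions and clique-sum decompositions: if a graph $G$ has a tree decomposition all of whose torsos belong to some class $\mathcal{C}$ that is closed under taking induced subgraphs and under deleting edges, then $G$ can be obtained from those torsos by a sequence of clique-sums. The only preliminary observation needed is that the class of $k$-almost$^{\downarrow}$ embeddable graphs is closed under edge deletion: it is closed under induced subgraphs by definition, and deleting an edge of a $k$-almost embeddable graph harms neither the surface embedding of $G_0$ nor any of the vortex decompositions, so the torsos supplied by \cref{GMST2} (which are $k$-almost$^{\downarrow}$ embeddable with maximum degree less than $8k\Delta$) lie in such a class $\mathcal{C}$.

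Next I would establish the folklore lemma by induction on $|V(T)|$, realised through a ``connected'' ordering $V(T)=(x_1,\dots,x_n)$ for which $\{x_1,\dots,x_i\}$ induces a subtree of $T$ for every $i$; then each $x_i$ with $i\ge 2$ has a unique neighbour $x_{j(i)}$ in $\{x_1,\dots,x_{i-1}\}$. Setting $G^{(i)}:=G_{x_1}\oplus\cdots\oplus G_{x_i}$ built up in this order, the key point is that $B_{x_i}\cap V(G^{(i-1)})=B_{x_i}\cap B_{x_{j(i)}}$: any earlier bag meeting $B_{x_i}$ sits on the side of $x_{j(i)}$ away from $x_i$ in $T$, so the connectivity axiom of tree decompositions forces the common vertices into $B_{x_{j(i)}}$. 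Since $x_ix_{j(i)}\in E(T)$, the set $B_{x_i}\cap B_{x_{j(i)}}$ is a clique in $G_{x_i}$ and in $G_{x_{j(i)}}\subseteq G^{(i-1)}$, so each step genuinely is a clique-sum. I would perform each clique-sum while deleting exactly those edges within the summing clique that are not edges of $G$; then no edge of $G$ is ever deleted (we only ever delete non-edges of $G$), and every ``virtual'' edge (one lying in some torso but not in $G$) is deleted at the first step where both its endpoints lie in the summing clique — and, by the same connectivity argument, again at every later such step — so it does not survive; hence $G^{(n)}=G$.

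With the lemma available, the corollary is immediate. Since $\mathcal{G}$ is a \emph{proper} minor-closed class there is a graph $H\notin\mathcal{G}$, and then every $G\in\mathcal{G}$ excludes $H$ as a minor. Apply \cref{GMST2} to this fixed $H$ to obtain the constant $k$; for each $G\in\mathcal{G}$ with maximum degree at most $\Delta$ it gives a tree decomposition whose torsos are $k$-almost$^{\downarrow}$ embeddable with maximum degree less than $8k\Delta$, and the lemma then expresses $G$ as a sequence of clique-sums of these torsos, as required.

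The main obstacle I expect is the bookkeeping inside the lemma: pinning down precisely which virtual edges must be deleted at which clique-sum step, and checking that an edge deleted early is not silently reintroduced by a later torso and left in place. This is where one must repeatedly invoke the connectivity axiom of tree decompositions — to show that whenever a virtual edge $vw$ reappears in $G_{x_l}$, both $v$ and $w$ again lie in $B_{x_l}\cap B_{x_{j(l)}}$, so the prescribed deletion at step $l$ removes it once more — and it is the only part of the argument that is not essentially formal.
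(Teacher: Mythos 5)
Your proposal is correct and matches the paper's intent: the paper states \cref{GMST3} as a ``direct consequence'' of \cref{GMST2}, implicitly invoking exactly the folklore correspondence between tree decompositions with torsos in a class and clique-sums of those torsos, which you spell out in full (your preliminary worry about closure under edge deletion is unnecessary, since the summands are the torsos themselves and the clique-sum operation already permits deleting edges inside the summing clique, but it does no harm).
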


\subsection{Partitions}

A \defn{vertex-partition}, or simply \defn{partition}, of a graph $G$ is a set $\PP$ of non-empty sets of vertices in $G$ such that each vertex of $G$ is in exactly one element of $\PP$. Each element of $\PP$ is called a \defn{part}. The \defn{quotient} of $\PP$ is the graph, denoted by $G/\PP$, with vertex set $\PP$ where distinct parts $A,B\in \PP$ are adjacent in $G/\PP$ if and only if some vertex in $A$ is adjacent in $G$ to some vertex in $B$.

A partition $\PP$ of a graph $G$ is called an \defn{$H$-partition} if $H$ is a graph that contains a spanning subgraph isomorphic to the quotient $G/\PP$. Alternatively, an \defn{$H$-partition} of a graph $G$ is a partition $(A_x:x\in V(H))$ of $V(G)$ indexed by the vertices of $H$, such that for every edge $vw\in E(G)$, if $v\in A_x$ and $w\in A_y$ then $x=y$ or $xy\in E(H)$. The \defn{width} of such an $H$-partition is $\max\{|A_x|: x\in V(H)\}$. Note that a layering is equivalent to a path-partition.

\citet*{DJMMUW20} introduced a layered variant of partitions (analogous to layered treewidth being a layered variant of treewidth). The \defn{layered width} of a partition $\PP$ of a graph $G$ is the minimum integer $\ell$ such that for some layering $(V_0,V_1,\dots)$ of $G$, each part in $\PP$ has at most $\ell$ vertices in each layer $V_i$. A partition $\PP$ of a graph $G$ is a \defn{$(k,\ell)$-partition} if $\PP$ has layered width at most $\ell$ and $G/\PP$ has treewidth at most $k$. A class $\mathcal{G}$ of graphs \defn{admits bounded layered partitions} if there exist $k,\ell\in\N$ such that every graph in $\mathcal{G}$ has a $(k,\ell)$-partition.

Several recent results show that various graph classes admit bounded layered partitions. The first results were for minor-closed classes by \citet*{DJMMUW20}, who proved that planar graphs admit bounded layered partitions; more generally, that graphs of bounded Euler genus admit bounded layered partitions; and most generally, a minor-closed class admits bounded layered partitions if and only if it excludes some apex graph. Some results for non-minor-closed classes were recently obtained by \citet*{DMW19b}. For example, they proved that $(g,k)$-planar graphs and $(g,d)$-map graphs admit bounded layered partitions amongst other examples.

\citet*{DJMMUW20} showed that this property implies bounded layered treewidth.

\begin{lem}[\citep{DJMMUW20}]
\label{PartitionLayeredTreewidth}
If a graph\/ $G$ has a\/ $(k,\ell)$-partition, then\/ $G$ has layered treewidth at most\/ $(k+1)\ell$.
\end{lem}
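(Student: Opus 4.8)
The plan is to build a tree-decomposition of $G$ from a $(k,\ell)$-partition $\PP$ together with its witnessing layering $(V_0,V_1,\dots)$, exactly mirroring the standard construction that shows bounded layered partitions imply bounded layered treewidth. First I would take a tree-decomposition $(C_y : y\in V(T))$ of the quotient $Q := G/\PP$ of width at most $k$, so each bag $C_y$ is a set of at most $k+1$ parts of $\PP$. For each node $y$ of $T$, I would define the corresponding bag of $G$ by $B_y := \bigcup_{A\in C_y} A$, i.e.\ replace each part in $C_y$ by the set of vertices it contains.

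The main steps are then to verify the two tree-decomposition axioms and to bound the intersection with each layer. For connectivity: a vertex $v\in V(G)$ lies in exactly one part $A\in\PP$ (since $\PP$ is a partition), and $v\in B_y$ iff $A\in C_y$; since $\{y : A\in C_y\}$ induces a connected subtree of $T$ by the tree-decomposition property of $(C_y)$, so does $\{y : v\in B_y\}$. For edges: given $vw\in E(G)$ with $v\in A$ and $w\in A'$, either $A=A'$, in which case any bag $C_y$ containing $A$ gives $v,w\in B_y$, or $AA'\in E(Q)$, in which case some bag $C_y$ contains both $A$ and $A'$, hence $v,w\in B_y$. So $(B_y:y\in V(T))$ is a tree-decomposition of $G$, and $(V_0,V_1,\dots)$ is still a layering of $G$. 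Finally, for each node $y$ and each layer $V_i$ we have $B_y\cap V_i = \bigcup_{A\in C_y}(A\cap V_i)$; since $|C_y|\le k+1$ and each part meets each layer in at most $\ell$ vertices (the definition of layered width), $|B_y\cap V_i|\le (k+1)\ell$. This exhibits a tree-decomposition and layering witnessing that $G$ has layered treewidth at most $(k+1)\ell$.

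None of the steps presents a real obstacle — the argument is a routine unfolding of definitions — so the only thing to be careful about is bookkeeping: making sure the same layering simultaneously witnesses the layered width of $\PP$ and serves as the layering in the definition of layered treewidth, and that the $+1$ in $k+1$ (width versus maximum bag size) is tracked correctly.
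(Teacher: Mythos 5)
Your proof is correct and is exactly the standard argument: the paper cites this lemma from \citet{DJMMUW20} without reproducing a proof, and the construction you give (blow up each bag of a width-$k$ tree-decomposition of the quotient $G/\PP$ by replacing each part with its vertex set, and reuse the layering witnessing the layered width) is the one used there. The verification of the two tree-decomposition axioms and the bound $|B_y\cap V_i|\le (k+1)\ell$ are all handled correctly.
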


What distinguishes layered partitions from layered treewidth is that layered partitions lead to constant upper bounds on the queue-number and non-repetitive chromatic number, whereas for both these parameters, the best known upper bounds obtainable via layered treewidth are $O(\log n)$. This led to the positive resolution of two old open problems; namely, whether planar graphs have bounded queue-number \citep{DJMMUW20} and whether planar graphs have bounded non-repetitive chromatic number \citep{DEJWW20}. Other applications include $p$-centred colouring \citep{DFMS20} and graph encoding / universal graphs~\citep{BGP20,DEJGMM,EJM}.

Our next tool is the following result by \citet*{DJMMUW20}.

\begin{lem}[\citep{DJMMUW20}]
\label{AlmostEmbeddableStructure}
Every\/ $k$-almost embeddable graph with no apex vertices has an\/  $(11k+10,6k)$-partition.
\end{lem}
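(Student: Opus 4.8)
I would follow \citep{DJMMUW20}: read a layered partition of the embedded part off the product structure theorem, then fold the vortices in by hand. We may assume $k\ge 1$ and (treating the components of $G$ separately and taking the union of the resulting partitions) that $G_0$ is connected; write $G=G_0\cup G_1\cup\dots\cup G_s$ with $s\le k$, where $G_0$ embeds in a surface of Euler genus at most $k$, the facial cycles $F_1,\dots,F_s$ are pairwise disjoint, and $(B^{(i)}_x:x\in V(F_i))$ is an $F_i$-decomposition of $G_i$ of width at most $k$ with $x\in B^{(i)}_x$. The first step is to contract each $F_i$ to a single vertex $f_i$; the resulting graph $G_0'$ still embeds in the same surface. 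Take a BFS layering of $G_0'$ rooted at $\{f_1,\dots,f_s\}$, so that each $f_i$ lies in layer $0$. The product structure theorem for graphs of bounded Euler genus \citep{DJMMUW20} then supplies a partition $\QQ$ of $G_0'$ (roughly, into vertical paths of the BFS forest) with layered width $O(k)$ and quotient treewidth $O(1)$; let $Q_i\in\QQ$ be the part containing $f_i$.

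The second step folds each vortex $G_i$ back into $\QQ$. Write $F_i=x_1x_2\cdots x_p x_1$; cutting the cycle between $x_p$ and $x_1$ converts the $F_i$-decomposition into a path-decomposition of width at most $k$ of (essentially) $G_i$, in which $x_j$ lies in the $j$-th bag. Collect in one part $R_i$ (of size at most $k$) the vortex vertices whose bag-set straddles the cut, and assign every other vortex vertex $v$ to the part $A^{(i)}_j:=\{x_j\}\cup\{v:x_j\text{ is the first bag of }v\}$, where $j$ ranges over $[p]$; each $A^{(i)}_j$ has at most $k+1$ vertices. Place all of $V(G_i)$ in layer $0$: this respects the layering because, after the contraction, every vertex of $F_i$ is in layer $0$ and every other vertex of $G_i$ has all its neighbours inside $G_i$. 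Let $\PP$ be obtained from $\QQ$ by replacing each $Q_i$ with the parts $A^{(i)}_1,\dots,A^{(i)}_p,R_i$ together with the part $Q_i\setminus\{f_i\}$ (the vertices of $Q_i$ in layers $\ge 1$, still a vertical path). Then $\PP$ has layered width $\max\{O(k),k+1\}=O(k)$, and $G/\PP$ is obtained from $G_0'/\QQ$ by replacing each vertex $Q_i$ with the quotient of $G_i$ under the above parts, which carries a path-decomposition of width $O(k)$ inherited from the cut vortex decomposition.

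The remaining — and I expect principal — task is to show that the treewidth of $G/\PP$ is at most $11k+10$. The plan is to take a width-$O(1)$ tree-decomposition of $G_0'/\QQ$ and, for each $i$, splice the width-$O(k)$ path-decomposition of $G_i$'s parts into the subtree of bags containing $Q_i$. Two points need care: \textbf{(a)} every edge of $G/\PP$ joining a vortex part $A^{(i)}_j$ to a part of $\QQ$ must end up in a common bag, which works because such an edge passes only through the single non-vortex vertex $x_j$ of $A^{(i)}_j$ and through a $G_0$-neighbour of $f_i$; and \textbf{(b)} the splices for different $i$ must not pile up in one bag, which is the one place the width could \emph{a priori} become quadratic in $k$ and which is controlled by the fact that distinct $f_i$ lie in distinct parts of $\QQ$. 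Tracking the constants through this splicing — the $O(1)$ from the planar core, a $+k$ from each vortex's path-decomposition, and a $+1$ from turning the cut cycle back into a cycle, all aggregated without blow-up — is what yields the value $11k+10$, and hence the claimed $(11k+10,6k)$-partition.
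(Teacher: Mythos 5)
First, note that \cref{AlmostEmbeddableStructure} is not proved in this paper at all: it is quoted from \citep{DJMMUW20}, so there is no in-paper argument to measure your attempt against. Judged on its own terms, your sketch does follow the broad strategy of \citep{DJMMUW20} --- apply the product-structure/layered-partition theorem for surfaces to the embedded part $G_0$, then absorb each vortex by distributing its bags among parts indexed by the vertices of the facial cycle --- and the vortex bookkeeping (cutting the circular $F_i$-decomposition into a path-decomposition, a leftover part $R_i$ for the vertices whose bag-set straddles the cut --- note these number at most $k+1$, not $k$, since the width bound gives $|B_{x_1}|\le k+1$ --- and parts $A^{(i)}_j$ of size at most $k+1$ placed in layer $0$) is essentially right, as is the observation that interior vortex vertices have no neighbours outside their vortex.

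The gap is that the conclusion of the lemma is the treewidth bound $11k+10$ on $G/\PP$, and your argument stops exactly where that bound has to be earned. The difficulty you label (a)--(b) is not a matter of ``tracking constants'': after the contraction of $F_i$ is undone, the single part $Q_i$ of $\QQ$ is replaced by $p+2$ parts $A^{(i)}_1,\dots,A^{(i)}_p,R_i,Q_i\setminus\{f_i\}$, and the (possibly unboundedly many) parts of $\QQ$ that were adjacent to $Q_i$ are now adjacent to various of the $A^{(i)}_j$, in a pattern that a priori need not be consecutive along the path $A^{(i)}_1\cdots A^{(i)}_p$. Splicing a width-$O(k)$ path-decomposition of the vortex quotient into the subtree of bags containing $Q_i$ therefore requires an argument --- using the embedding of $G_0$ and the structure of the vertical-path partition $\QQ$ --- that each external part meets only a controlled (essentially interval-like) set of the $A^{(i)}_j$'s; without it the bag sizes can blow up with $p$ or with the degree of $Q_i$ in the quotient, not merely with $k$. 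That step is precisely the substantive content of the proof in \citep{DJMMUW20}, and it is absent here: as written, the proposal establishes the layered width $O(k)$ but only conjectures the treewidth bound, so it does not yet constitute a proof of the lemma.
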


\subsection{Excluding a Minor}

We now prove that a result like \cref{AlmostEmbeddableStructure} also holds for $k$-almost embeddable graphs in which all the apex vertices have bounded degree (and in particular if the graph has bounded degree). 

\begin{lem}
\label{DropApices}
Let\/ $G$ be a graph such that, for some\/ $A\subseteq V(G)$, every vertex in\/ $A$ has degree at most\/ $\Delta\in\N$, and\/ $G-A$ has a\/ $(k,\ell)$-partition. Then\/ $G$ has a\/ $(k+1,2\ell\Delta|A|)$-partition.
\end{lem}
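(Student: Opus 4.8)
The plan is to start from the given $(k,\ell)$-partition $\PP$ of $G-A$ with witnessing layering $(V_0,V_1,\dots)$, and to modify it into a partition of all of $G$ by putting the apex vertices into existing parts (or their own parts) without increasing the treewidth of the quotient too much, and controlling the layered width. The key idea is that the vertices of $A$ are few ($|A|$ of them) and each has bounded degree $\Delta$, so each $a\in A$ has a ``cheap'' way to be absorbed: it only interacts with at most $\Delta$ other vertices.

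First I would build the new layering. The issue is that an apex vertex may be adjacent to vertices in many different layers $V_i$, so simply inserting it into some $V_i$ need not yield a valid layering. The standard fix is to coarsen the layering: group the layers into blocks. But a cleaner route, given that we only need to beat $k+1$ on treewidth and we're allowed a factor of $\Delta|A|$ on the layered width, is to first observe that $G-A$ also has layered width at most $\ell$ with respect to a BFS-type relayering — actually simpler: I would instead not change the layering at all for the vertices of $G-A$, and handle each apex vertex by creating a \emph{single new part} $\{a\}$ for it. Then in the quotient $G/\PP'$, this new part $\{a\}$ is a vertex adjacent to at most $\Delta$ other parts (those containing a neighbour of $a$). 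Adding $|A|$ such vertices to the quotient: I need the resulting graph to have treewidth at most $k+1$. Taking a tree-decomposition of $G/\PP$ of width $k$ and adding each new vertex $\{a\}$ to \emph{every} bag would give width $k+|A|$, which is too much. So the new parts cannot be handled that crudely.

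The better approach: merge each apex vertex into an \emph{existing} part. Fix a tree-decomposition $(T, (W_x))$ of $G/\PP$ of width $k$. For an apex vertex $a$, pick any one neighbour $u_a$ of $a$ in $G$ (if $a$ has a neighbour outside $A$; handle the all-in-$A$ case separately), let $P_a$ be the part of $\PP$ containing $u_a$, and add $a$ to $P_a$. Now the trouble is that $a$'s \emph{other} neighbours lie in parts that may be far from $P_a$ in the quotient, creating new edges in the quotient incident to $P_a$. To absorb these I would, for each apex vertex $a$ and each neighbour part $Q$ of $a$, add the node (vertex of the quotient) $Q$ into the bags along the path in $T$ between (a bag containing) $Q$ and (a bag containing) $P_a$ — the usual trick for adding an edge to a graph of bounded treewidth while raising treewidth by a controlled amount. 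Since $a$ has at most $\Delta$ neighbours, this adds at most $\Delta$ extra ``passing'' vertices to the relevant bags, but summed over all $a\in A$ a single bag could be hit $\Delta|A|$ times. To avoid blowing up treewidth by $\Delta|A|$, I would instead use the cleaner merging: put \emph{all} vertices of $A$ into a single new part together (or rather, observe that since $|A|$ can be unbounded this fails too).

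Let me restate the approach that actually matches the target bounds. I would: (1) take the layering $(V_i)$ of $G-A$ and define a new layering of $G$ by placing each $a\in A$ into the \emph{earliest} layer $V_{i(a)}$ that meets $N(a)$ — but first coarsening so validity holds; more carefully, for each $a\in A$ let $i(a)=\min\{i : V_i\cap N(a)\neq\emptyset\}$ and note $N(a)\subseteq V_{i(a)}\cup\dots\cup V_{i(a)+r}$ only if $r$ is bounded, which it need not be — hence I genuinely must coarsen. So: let the new layering have a single layer $V_0'$ containing all of $A$ together with $V_0\cup\dots\cup V_{t}$ where... no. The honest plan: since the five coordinates to control are subtle, I expect the \textbf{main obstacle} to be simultaneously (a) keeping the quotient's treewidth at $k+1$ and (b) keeping the layered width linear in $\ell\Delta|A|$ and (c) keeping the layering valid across the newly added apex vertices. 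The resolution I anticipate the authors use, and which I would follow, is: form the quotient $G/\PP$, take its tree-decomposition $(T,(W_x))$ of width $k$; pick one root bag $x_0$; add a single new part $A$ (all apex vertices, one part) to $G-A$'s partition. Then $(G)/\PP'$ has at most one extra vertex beyond $G/\PP$, so its treewidth is at most $k+1$ (add the vertex ``$A$'' to every bag). The part $A$ has $|A|$ vertices in total; to bound its per-layer count we use the relayering where $A$ is spread out: but one part having $|A|$ vertices in one layer gives layered width $\max(\ell,|A|)$, not $2\ell\Delta|A|$ — so the $\Delta$ factor must come from fixing the layering.

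Putting it together cleanly: I would coarsen $(V_i)$ into super-layers each of width enough to ``capture'' every apex vertex's neighbourhood, but since a single apex vertex's neighbourhood spans at most... here's the point: each apex vertex has $\le\Delta$ neighbours so its neighbourhood is contained in $\le\Delta$ layers, but they need not be consecutive. Replace each apex vertex $a$ by the observation that in $G$, the $\le\Delta$ edges at $a$ mean $a$ ``wants'' to be in $\le\Delta$ different positions; the fix is to not relayer but to note that we may assume (relayer $G-A$ so that) every vertex of $G-A$ appears in layer $V_i$ with $i$ equal to its distance from a fixed root, and then insert each $a$ into the layer of its minimum-index neighbour after subdividing conceptually. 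Rather than resolve this fully here, the plan is: (i) put each $a\in A$ into its own singleton part; (ii) observe the new quotient $Q'$ is obtained from $Q=G/\PP$ by adding $|A|$ vertices each of degree $\le\Delta$ into $Q$; (iii) crucially, instead of bounding $\operatorname{tw}(Q')$ directly, \emph{merge the singleton parts into nearby parts of $Q$ along the tree-decomposition}, which increases each part by at most $\Delta|A|$ vertices (each apex vertex and its $\Delta$ neighbours' ``worth'') and the quotient treewidth by $1$; (iv) re-examine the layered width: a part grows by at most the apex vertices merged into it plus we may need a factor $2$ and a factor $\ell$ from the original width, giving layered width $\le 2\ell\Delta|A|$; (v) verify validity of the layering for the enlarged parts. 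The \textbf{hard step} is (iii)–(iv): showing the merging can be done so that both the $+1$ on treewidth and the $2\ell\Delta|A|$ on layered width hold simultaneously — this is where the precise combinatorics of which apex vertex goes into which part, and along which tree path, must be pinned down, and I would expect to do it by processing the apex vertices one at a time, each time adding at most one new vertex to each bag of a suitably chosen subtree and re-checking the layered-width accounting.
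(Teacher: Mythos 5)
There is a genuine gap: you correctly identify all the right ingredients --- a single new part $A$ keeps the quotient's treewidth at $k+1$, and the real obstacle is that an apex vertex may have neighbours scattered across arbitrarily many, non-consecutive layers --- but you never resolve that obstacle, and you say so yourself (``Rather than resolve this fully here\dots'', ``The hard step is (iii)--(iv)\dots''). The missing idea is the specific coarsening of the layering. Let $I$ be the set of indices $i$ such that $V_i$ contains a neighbour of some vertex of $A$; since each vertex of $A$ has degree at most $\Delta$, we have $|I|\leq\Delta|A|$. Now relayer by distance to $I$: setting $d_j$ to be the distance from $j$ to $I$ in the path $(0,1,2,\dots)$ and $W_i:=\bigcup\{V_j: d_j=i\}$ gives a valid layering of $G-A$ (since $|a-b|\leq 1$ implies $|d_a-d_b|\leq 1$), in which each $W_i$ is a union of at most $2|I|$ original layers, so the old partition has layered width at most $2\ell|I|\leq 2\ell\Delta|A|$ with respect to it. Crucially, \emph{every} neighbour of every apex vertex now lies in $W_0$, so adding $A$ to $W_0$ and taking $\QQ:=\PP\cup\{A\}$ yields a valid layering of $G$ and a quotient with one extra vertex, hence treewidth at most $k+1$. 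This is exactly where the factor $2\ell\Delta|A|$ in the statement comes from, which you noticed must ``come from fixing the layering'' but did not produce.

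Your fallback plan (iii)--(iv) would not work as described: merging each apex vertex into an existing part and, for each of its $\leq\Delta$ neighbour-parts, adding a quotient vertex to the bags along a tree path, processed ``one at a time,'' can increase the width of a single bag once per apex vertex, giving treewidth $k+O(|A|)$ rather than $k+1$; and it still leaves the layering validity for the enlarged parts unaddressed. The singleton-parts variant fails for the same reason you already noted. So the proposal correctly diagnoses the difficulty but does not contain a proof.
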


\begin{proof}
Let $\PP$ be a $(k,\ell)$-partition of $G-A$, where $\PP$ has layered width at most $\ell$ with respect to a layering $(V_0,V_1,\dots)$ of $G-A$. Let $I$ be the set of integers $i$ such that some vertex in $A$ has a neighbour in $V_i$. Thus $|I| \leq \Delta |A|$. Let $P$ be the path graph $(0,1,\dots)$. For $j\in\NN$, let $d_j$ be the minimum distance in $P$ from $j$ to a vertex in $I$. For $i\in\NN$, let $W_i$ be the union of the sets $V_j$ such that $d_j=i$. For each edge $vw$ of $G$, if $v\in V_a$ and $w\in V_b$ then $|a-b|\leq 1$, implying $|d_a-d_b|\leq 1$. Thus $(W_0,W_1,\dots)$ is a layering of $G-A$.
Observe that each layer $W_i$ is the union of at most $2|I|$ original layers (at most two layers between each pair of consecutive elements in $I$, plus one layer before $\min I$ and one layer after $\max I$).
Thus $\PP$ has layered width at most $2\ell |I|\leq 2\ell\Delta |A|$ with respect to $(W_0,W_1,\dots)$.
By construction, the vertices of $G-A$ that are neighbours of vertices in $A$ are all in $W_0$.
Add $A$ to $W_0$. We thus obtain a layering of $G$.
Let $\QQ$ be the partition of $G$ obtained from $\PP$ by adding one new part $A$. Thus $\QQ$ has layered width at most $2\ell\Delta |A|$ with respect to $(W_0,W_1,\dots)$. Since $G/\QQ$ has only one more vertex than $(G-A)/\PP$, the treewidth of $G/\QQ$ is at most $k+1$.
\end{proof}

\cref{AlmostEmbeddableStructure,DropApices} lead to the next result.

\begin{lem}
\label{AlmostEmbeddableStructureBoundedDegree}
Every\/ $k$-almost\/$^{\downarrow}\!$ embeddable graph\/ $G$ such that every apex vertex has degree at most\/ $\Delta\in\N$ has an\/ $(11k+11,12k^2\Delta)$-partition.
\end{lem}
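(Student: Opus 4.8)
The plan is to combine Lemmas~\ref{AlmostEmbeddableStructure} and~\ref{DropApices}, after peeling off the apex vertices and checking that bounded layered partitions pass to induced subgraphs. First I would unpack the hypothesis: since $G$ is $k$-almost$^{\downarrow}\!$ embeddable, it is an induced subgraph of some $k$-almost embeddable graph $G'$; let $A'\subseteq V(G')$ with $|A'|\leq k$ be an apex set witnessing the almost-embeddability of $G'$, and put $A:=A'\cap V(G)$. Then $|A|\leq k$, and $A$ is exactly the set of apex vertices of $G$, so by hypothesis every vertex of $A$ has degree at most $\Delta$ in $G$.

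Second, observe that $G-A$ is an induced subgraph of $G'-A'$, and that $G'-A'$ is $k$-almost embeddable with no apex vertices (take the empty apex set, which satisfies $|\varnothing|\leq k$, together with the same $G_0,\dots,G_s$). By \cref{AlmostEmbeddableStructure}, $G'-A'$ has an $(11k+10,6k)$-partition. Next I would record the (routine but necessary) fact that $(k,\ell)$-partitions are inherited by induced subgraphs: if $\PP$ is a $(k,\ell)$-partition of a graph $H$ with respect to a layering $(V_0,V_1,\dots)$, and $H'=H[S]$, then the nonempty sets $P\cap S$ for $P\in\PP$ form a partition $\PP'$ of $H'$, the sequence $(V_0\cap S,V_1\cap S,\dots)$ is a layering of $H'$ (edges of $H'$ are edges of $H$), each part of $\PP'$ meets each layer in at most $\ell$ vertices, and $H'/\PP'$ is isomorphic to a subgraph of $H/\PP$, so it has treewidth at most $k$ by minor-monotonicity of treewidth. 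Applying this with $H=G'-A'$ and $S=V(G)\setminus A$ gives that $G-A$ has an $(11k+10,6k)$-partition.

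Finally, apply \cref{DropApices} with parameters $(k,\ell)=(11k+10,6k)$ to the graph $G$ with apex set $A$, where $|A|\leq k$ and every vertex of $A$ has degree at most $\Delta$: we conclude that $G$ has a $\bigl(11k+10+1,\; 2\cdot 6k\cdot\Delta\cdot k\bigr)$-partition, that is, an $(11k+11,\,12k^2\Delta)$-partition, as required.

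The only real subtlety — and it is modest — is the bookkeeping around $k$-almost$^{\downarrow}\!$ embeddability: one must be careful that the apex vertices of $G$ are precisely $A'\cap V(G)$ so that the degree hypothesis applies to them, that deleting them leaves us inside a genuine apex-free $k$-almost embeddable graph so that \cref{AlmostEmbeddableStructure} is applicable, and that the passage to induced subgraphs does not spoil the partition (this last point being exactly why the ``$\downarrow$'' refinement is convenient here). Everything else is a direct quotation of the two preceding lemmas together with the arithmetic $2\cdot 6k\cdot\Delta\cdot k=12k^2\Delta$.
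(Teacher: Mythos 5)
Your proposal is correct and follows essentially the same route as the paper's proof: pass to the witnessing $k$-almost embeddable graph $G'$, reduce to a common apex set, apply \cref{AlmostEmbeddableStructure} to the apex-free part, restrict the resulting $(11k+10,6k)$-partition to the induced subgraph $G-A$, and finish with \cref{DropApices} using $|A|\le k$. The only difference is presentational — you spell out the (correct) restriction-to-induced-subgraphs step that the paper states without proof.
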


\begin{proof}
By definition, $G$ is an induced subgraph of a $k$-almost embeddable graph $G'$. Since deleting an apex vertex in a $k$-almost embeddable graph produces another $k$-almost embeddable graph, we may assume that $G$ and $G'$ have the same set $A$ of apex vertices. By \cref{AlmostEmbeddableStructure}, $G'-A$ has an\/ $(11k+10,6k)$-partition $\PP'$. Let $\PP$ be obtained by restricting $\PP'$ to $V(G-A)$. Thus $\PP$ is an\/ $(11k+10,6k)$-partition of $G-A$. Since every vertex in $A$ has degree at most $\Delta$ in $G$, the result follows from \cref{DropApices}.
\end{proof}

\citet*{DJMMUW20} introduced (an equivalent version of) the following definitions and lemmas as a way to handle clique sums. 
Let $C$ be a clique in a graph $G$, and let $\{C_0,C_1\}$ and $\{P_1,\dots,P_c\}$ be partitions of $C$. A $(k,\ell)$-partition $\PP$ of $G$ is \defn{$(C, \{C_0,C_1\}, \{P_1,\dots,P_c\})$-friendly} if $P_1,\dots,P_c\in\PP$ and $\PP$ has layered width at most $\ell$ with respect to some layering $(V_0,V_1,\dots)$ of $G$ with $C_0\subseteq V_0$ and $C_1\subseteq V_1$. 

\begin{lem}[\citep{DJMMUW20}]
	\label{CliqueFriendly}
	Let\/ $G$ be a graph that has a $(k,\ell)$-partition.
	Let\/ $C$ be a clique in\/ $G$, and let\/ $\{C_0,C_1\}$ and\/ $\{P_1,\ldots,P_c\}$ be partitions of\/ $C$ such that\/ $|C_j \cap P_i|\le 2\ell$ for each\/ $j \in \{0,1\}$ and each\/ $i\in \{1,\ldots,c\}$. Then\/ $G$ has a\/ $(C,\{C_0,C_1\},\{P_1,\ldots,P_c\})$-friendly\/ $(k+c,2\ell)$-partition.
\end{lem}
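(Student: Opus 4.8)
The plan is to start from the given $(k,\ell)$-partition $\PP$ of $G$, together with the layering $(U_0,U_1,\dots)$ witnessing its layered width, and then surgically modify both the partition and the layering so that the clique $C$ behaves as required. There are two independent things to fix: first, the parts $P_1,\dots,P_c$ must literally appear in the partition (currently the vertices of each $P_i$ may be scattered among several parts of $\PP$); second, the layering must place $C_0$ in layer $0$ and $C_1$ in layer $1$ (currently the vertices of $C$ may lie in arbitrarily many different layers of $(U_0,U_1,\dots)$). Since $C$ is a clique, all its vertices lie within at most two consecutive layers of $(U_0,U_1,\dots)$ already, and each part of $\PP$ meets each layer in at most $\ell$ vertices, so each part meets $C$ in at most $2\ell$ vertices — this is the structural fact that makes the operation affordable.

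First I would handle the parts. For each $i\in\{1,\dots,c\}$, remove the vertices of $P_i$ from whatever parts of $\PP$ they currently occupy and make $P_i$ a new part; call the resulting partition $\PP'$. This adds at most $c$ new parts, and shrinking existing parts cannot increase the layered width, so $\PP'$ still has layered width at most $\ell$ with respect to $(U_0,U_1,\dots)$. For the quotient, $(G/\PP')$ is obtained from $(G/\PP)$ by splitting some vertices and adding $c$ new vertices, which can increase the treewidth by at most $c$ — more carefully, one takes a tree-decomposition of $(G/\PP)$ of width $k$ and, in every bag containing the (old) part that a given $P_i$ was carved out of, adds the vertex for $P_i$; since each $P_i$ came out of parts that already formed a clique-like neighbourhood this adds at most $c$ to every bag. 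So $(G/\PP')$ has treewidth at most $k+c$, i.e. $\PP'$ is a $(k+c,\ell)$-partition with $P_1,\dots,P_c\in\PP'$.

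Next I would fix the layering. Let $a$ be the smallest layer index with $C\cap U_a\neq\emptyset$; since $C$ is a clique, $C\subseteq U_a\cup U_{a+1}$. Define a new layering $(V_0,V_1,\dots)$ of $G$ by, roughly, shifting indices so that $U_a$ becomes layer $0$ and $U_{a+1}$ becomes layer $1$, and then moving the vertices of $C$ as needed: put every vertex of $C_0$ into $V_0$ and every vertex of $C_1$ into $V_1$ (removing it from wherever the shifted layering had placed it). The point is that the bound $|C_j\cap P_i|\le 2\ell$ for $j\in\{0,1\}$ and $i\in\{1,\dots,c\}$ — together with the fact that the non-$C$ vertices of each part still satisfy the layered-width-$\ell$ condition in at most two affected layers — guarantees that each part meets each new layer in at most $2\ell$ vertices, giving layered width at most $2\ell$. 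One has to check this is still a valid layering, i.e.\ that no edge jumps two layers: edges incident to $C$ are the only ones whose endpoints moved, and a vertex of $C$ only moved between $U_a\cup U_{a+1}$ and $V_0\cup V_1$, which are in bijection, while every neighbour of a $C$-vertex lies in $U_{a-1}\cup U_a\cup U_{a+1}\cup U_{a+2}$ (now $V_{-1}$–$V_2$, suitably indexed), so the move by one layer keeps all such edges within consecutive layers. The quotient $(G/\PP')$ is unchanged by relabelling layers, so the treewidth bound $k+c$ is preserved; renaming $\PP'$ to $\PP$ finishes the proof.

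The main obstacle is the second step: one must be careful that relocating the clique vertices into layers $0$ and $1$ does not create an edge spanning three layers and does not blow up the layered width beyond $2\ell$ in any part, and the bookkeeping is cleanest if one first normalises so that $C$ occupies exactly two layers and then argues that only a bounded window of layers is disturbed. The hypothesis $|C_j\cap P_i|\le 2\ell$ is exactly what is needed to absorb the clique vertices of $P_i$ into one layer without exceeding $2\ell$; everything else is routine verification of the partition and layering axioms.
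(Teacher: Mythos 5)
Your first step is essentially sound: carving $P_1,\dots,P_c$ out of the existing parts and inserting the $c$ new quotient vertices into the bags of a width-$k$ tree-decomposition of $G/\PP$ gives quotient treewidth at most $k+c$ (the cleanest bookkeeping is to add all $c$ new vertices to every bag; since the parts of $\PP$ meeting $C$ form a clique in $G/\PP$ and hence lie in a common bag, connectivity of the subtrees and coverage of the new edges are easy to verify). Your observations that $C\subseteq U_a\cup U_{a+1}$ and that each part meets $C$ in at most $2\ell$ vertices are also correct. Note that the paper does not prove this lemma itself --- it is quoted from \citep{DJMMUW20} --- so the comparison below is with the construction used there.

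The genuine gap is in your re-layering. A shift that makes $U_a$ into layer $0$ is not a layering: the layers $U_0,\dots,U_{a-1}$ receive negative indices, and you cannot simply gesture at ``$V_{-1}$''. Worse, even setting that aside, relocating the clique vertices breaks the layering condition: a vertex $v\in C_0\cap U_{a+1}$ (shifted index $1$) that you move down to $V_0$ may have a neighbour in $U_{a+2}$ (shifted index $2$), producing an edge from layer $0$ to layer $2$; symmetrically, a vertex of $C_1\cap U_a$ moved up to $V_1$ may have a neighbour in $U_{a-1}$. So your claim that ``the move by one layer keeps all such edges within consecutive layers'' is false. The repair is to \emph{fold} the layering about the interface between $U_a$ and $U_{a+1}$: set $V_i:=U_{a-i}\cup U_{a+1+i}$ for $i\ge 0$ (with $U_j:=\emptyset$ for $j<0$). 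This is a valid layering because the new index of $U_p$ is $\max\{a-p,\,p-a-1\}$, which changes by at most $1$ between consecutive values of $p$. Then move $C_1$ from $V_0$ to $V_1$; this is safe because all neighbours of vertices of $C$ lie in $U_{a-1}\cup U_a\cup U_{a+1}\cup U_{a+2}\subseteq V_0\cup V_1$. This folding, not the hypothesis on $|C_j\cap P_i|$, is what pushes the layered width of the \emph{old} parts up to $2\ell$ (each new layer is a union of two old layers); the hypothesis $|C_j\cap P_i|\le 2\ell$ is then exactly what keeps the \emph{new} parts $P_i$ within the same bound in layers $0$ and $1$.
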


A graph $G$ \defn{admits clique-friendly $(k,\ell)$-partitions} if for every clique $C$ in $G$, and for all partitions $\{C_0,C_1\}$ and $\{P_1,\dots,P_c\}$ of $C$, there is a $(C,\{C_0,C_1\},\{P_1,\dots,P_c\})$-friendly $(k,\ell)$-partition of $G$. A graph class $\mathcal{G}$ \defn{admits clique-friendly $(k,\ell)$-partitions} if every graph in $\mathcal{G}$ admits clique-friendly $(k,\ell)$-partitions. 

\begin{lem}[\citep{DJMMUW20}]
	\label{CliqueFriendlyCliqueSum}
	Let\/ $\cal G$ be a graph class that admits clique-friendly\/ $(k,\ell)$-partitions. Then the class of graphs obtained from clique-sums of graphs in\/ $\cal G$ admits clique-friendly\/ $(k,\ell)$-partitions.
\end{lem}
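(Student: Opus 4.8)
The plan is to isolate the following key claim and then iterate it: \emph{if graphs $G_1$ and $G_2$ each admit clique-friendly $(k,\ell)$-partitions, then so does every clique-sum $G$ of $G_1$ and $G_2$.} Granting the claim, the lemma follows by induction on the number of clique-sum operations used to build $G$: if $G$ itself lies in $\mathcal{G}$ the hypothesis applies directly, and otherwise $G$ is a clique-sum of two graphs each built with fewer operations, which admit clique-friendly $(k,\ell)$-partitions by induction, so the claim finishes the job.

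To prove the claim, write $V(G)=V(G_1)\cup V(G_2)$ with $V(G_1)\cap V(G_2)=C$ the summed clique (if $C=\emptyset$ the claim is immediate, so assume $C\neq\emptyset$). Fix an arbitrary clique $C^*$ of $G$ together with partitions $\{C^*_0,C^*_1\}$ and $\{P_1,\dots,P_c\}$ of $C^*$; the goal is to produce a $(C^*,\{C^*_0,C^*_1\},\{P_1,\dots,P_c\})$-friendly $(k,\ell)$-partition of $G$. Since a clique-sum creates no edges between $V(G_1)\setminus C$ and $V(G_2)\setminus C$, the clique $C^*$ lies entirely in $V(G_1)$ or entirely in $V(G_2)$; by symmetry assume $C^*\subseteq V(G_1)$. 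As every edge of $G$ inside $V(G_1)$ is an edge of $G_1$, the set $C^*$ is a clique of $G_1$, so the hypothesis on $G_1$ supplies a $(C^*,\{C^*_0,C^*_1\},\{P_1,\dots,P_c\})$-friendly $(k,\ell)$-partition $\PP_1$ of $G_1$, witnessed by a layering $(A_0,A_1,\dots)$ of $G_1$ with $C^*_0\subseteq A_0$ and $C^*_1\subseteq A_1$.

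Next I would let $\PP_1$ dictate the data used to glue $G_2$ on along $C$. Being a clique of $G_1$, the set $C$ lies in two consecutive layers $A_p\cup A_{p+1}$; put $D_0:=C\cap A_p$ and $D_1:=C\cap A_{p+1}$, and let $\{Q_1,\dots,Q_m\}$ be the partition of $C$ formed by the non-empty sets $P\cap C$ over $P\in\PP_1$, so that each $Q_j=\hat P_j\cap C$ for a unique part $\hat P_j\in\PP_1$. Now apply the hypothesis to $G_2$ (where $C$ is again a clique) to obtain a $(C,\{D_0,D_1\},\{Q_1,\dots,Q_m\})$-friendly $(k,\ell)$-partition $\PP_2$ of $G_2$, witnessed by a layering $(B_0,B_1,\dots)$ with $D_0\subseteq B_0$ and $D_1\subseteq B_1$; reindex it to $(B'_0,B'_1,\dots)$ with $B'_i:=B_{i-p}$ (and $B'_i:=\emptyset$ for $i<p$) so that $C$ sits in layers $p,p+1$ exactly as in $(A_i)$. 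Because $\{Q_1,\dots,Q_m\}$ partitions $C$ and each $Q_j\in\PP_2$, the parts of $\PP_2$ meeting $C$ are exactly $Q_1,\dots,Q_m$; and since $Q_j\subseteq\hat P_j\in\PP_1$, merging $\PP_1$ with $\PP_2$ (absorbing each $Q_j$ into $\hat P_j$) produces precisely $\PP:=\PP_1\cup(\PP_2\setminus\{Q_1,\dots,Q_m\})$, which is a partition of $V(G)$ and still has each $P_i$ as a part.

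Finally I would verify that $\PP$ is a friendly $(k,\ell)$-partition. Take the layering $(W_0,W_1,\dots)$ of $G$ with $W_i:=A_i\cup B'_i$; this is a layering because every edge of $G$ lies in $V(G_1)$ or in $V(G_2)$, and $C^*_0\subseteq W_0$, $C^*_1\subseteq W_1$. A part of $\PP$ inherited from $\PP_2$ is disjoint from $C$, hence meets each $W_i$ only in $B'_i$, in at most $\ell$ vertices; and a part $P\in\PP_1$ satisfies $P\cap W_i=P\cap A_i$, since the vertices of $P$ in $B'_i$ lie in $C\subseteq D_0\cup D_1$ and the identities $C\cap A_p=D_0\subseteq B'_p$ and $C\cap A_{p+1}=D_1\subseteq B'_{p+1}$ show those vertices are already among $P\cap A_p$, respectively $P\cap A_{p+1}$; so $\PP$ has layered width at most $\ell$ with respect to $(W_i)$. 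Moreover $G/\PP$ is a subgraph of the clique-sum of $G_1/\PP_1$ and $G_2/\PP_2$ along the clique $\{\hat P_1,\dots,\hat P_m\}\leftrightarrow\{Q_1,\dots,Q_m\}$, so $\mathrm{tw}(G/\PP)\le\max\{\mathrm{tw}(G_1/\PP_1),\mathrm{tw}(G_2/\PP_2)\}\le k$, using that treewidth does not increase under taking subgraphs and that the treewidth of a clique-sum is the maximum of the treewidths of its two summands. Hence $\PP$ is a $(C^*,\{C^*_0,C^*_1\},\{P_1,\dots,P_c\})$-friendly $(k,\ell)$-partition of $G$, which proves the claim. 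I expect the crux to be step three: one must resist fixing the gluing data on $C$ in advance and instead read it off from the $\PP_1$ obtained for $C^*$, so that $\PP_2$ only refines $\PP_1$ on $C$ and the prescribed parts $P_1,\dots,P_c$ survive the merge unchanged; the layered-width bookkeeping with the shifted layering is then routine.
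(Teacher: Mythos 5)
Your proof is correct and follows essentially the same route as the argument in \citep{DJMMUW20}, which this paper cites without reproving: reduce to a single clique-sum by induction, first obtain the friendly partition of the summand containing the target clique, then read off from its layering and parts the friendliness data imposed on the joint clique of the other summand, and merge. The bookkeeping (shifted layering, absorbing each $Q_j$ into $\hat P_j$, and bounding the treewidth of the quotient via a clique-sum of the two quotients) all checks out.
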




%

\cref{AlmostEmbeddableStructureBoundedDegree,CliqueFriendly} lead to the next result.

\begin{lem}
\label{CliqueFriendlyAlmostEmbeddable}
Every\/ $k$-almost\/$^{\downarrow}\!$ embeddable graph\/ $G$ of maximum degree at most\/ $\Delta\in\N$ admits clique-friendly\/ $(19k+11,24k^2\Delta)$-partitions.
\end{lem}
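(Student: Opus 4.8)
The plan is to obtain a single $(11k+11,12k^2\Delta)$-partition from \cref{AlmostEmbeddableStructureBoundedDegree}, and then upgrade it to a clique-friendly one for an arbitrary prescribed clique and its partitions using \cref{CliqueFriendly}, paying only an additive price in the quotient's treewidth bounded by the maximum clique size. First I would note that since $G$ has maximum degree at most $\Delta$, in particular every apex vertex of $G$ has degree at most $\Delta$, so \cref{AlmostEmbeddableStructureBoundedDegree} applies and yields an $(11k+11,12k^2\Delta)$-partition $\PP_0$ of $G$. Set $\ell:=12k^2\Delta$.

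Next, fix any clique $C$ in $G$ together with partitions $\{C_0,C_1\}$ and $\{P_1,\dots,P_c\}$ of $C$. Because $G$ is $k$-almost$^{\downarrow}\!$ embeddable, it is an induced subgraph of some $k$-almost embeddable graph $G'$, and $C$ is a clique in $G'$ as well; hence $|C|\le 8k$ by \cref{CliqueSize}. In particular $c\le|C|\le 8k$ and, for all $j\in\{0,1\}$ and $i\in\{1,\dots,c\}$, we have $|C_j\cap P_i|\le|C|\le 8k\le 24k^2\Delta=2\ell$ (using $1\le 3k\Delta$ since $k,\Delta\in\N$). Thus the hypotheses of \cref{CliqueFriendly} are met with $\PP_0$ in the role of the given $(k,\ell)$-partition, and it produces a $(C,\{C_0,C_1\},\{P_1,\dots,P_c\})$-friendly $(11k+11+c,\,2\ell)$-partition of $G$, i.e. an $(11k+11+c,\,24k^2\Delta)$-partition. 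Since $c\le 8k$, this is in particular a $(C,\{C_0,C_1\},\{P_1,\dots,P_c\})$-friendly $(19k+11,24k^2\Delta)$-partition (a $(k_1,\ell')$-partition is also a $(k_2,\ell')$-partition whenever $k_1\le k_2$). As the clique and partitions were arbitrary, $G$ admits clique-friendly $(19k+11,24k^2\Delta)$-partitions.

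I do not expect any serious obstacle here: the argument is a two-step composition of earlier lemmas. The only points that require care are (i) observing that the bounded-degree hypothesis is enough to invoke \cref{AlmostEmbeddableStructureBoundedDegree} (apex vertices automatically have bounded degree), (ii) transferring the clique-size bound of \cref{CliqueSize} from a $k$-almost embeddable graph to its induced subgraph $G$, so that both $c$ and the intersection sizes $|C_j\cap P_i|$ are controlled by $8k$, and (iii) checking the numerical inequality $8k\le 2\ell=24k^2\Delta$ so that \cref{CliqueFriendly} is applicable and the resulting treewidth bound $11k+11+c$ is absorbed into $19k+11$.
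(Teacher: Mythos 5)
Your proof is correct and follows the paper's argument exactly: apply \cref{AlmostEmbeddableStructureBoundedDegree} to get an $(11k+11,12k^2\Delta)$-partition, then combine \cref{CliqueFriendly} with the clique-size bound $|C|\le 8k$ from \cref{CliqueSize} to absorb the additive $c\le 8k$ into $19k+11$ and double the layered width to $24k^2\Delta$. The paper states this in one line; you have merely spelled out the same verification.
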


\begin{proof}
By \cref{AlmostEmbeddableStructureBoundedDegree}, $G$ has an\/ $(11k+11,12k^2\Delta)$-partition. It follows from \cref{CliqueFriendly,CliqueSize} that $G$ admits clique-friendly $(19k+11,24k^2\Delta)$-partitions. 
\end{proof}

The following result, of independent interest, says that bounded-degree graphs excluding a fixed minor admit bounded layered partitions.

\begin{thm}
\label{MinorFreeDeltaLayeredPartition}
For every fixed graph\/ $H$, there is a constant\/ $k\in\N$ such that every\/ $H$-minor-free graph with maximum degree\/ $\Delta\in\N$ has a\/ $(k,k\Delta)$-partition.
\end{thm}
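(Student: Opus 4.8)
The plan is to combine the Graph Minor Structure Theorem (in the form of \cref{GMST2} or \cref{GMST3}) with the machinery of clique-friendly layered partitions. First I would apply \cref{GMST3}: for the proper minor-closed class of $H$-minor-free graphs, there is a constant $k_0\in\N$ such that every $H$-minor-free graph $G$ with maximum degree at most $\Delta$ is obtained by clique-sums of $k_0$-almost$^{\downarrow}\!$ embeddable graphs of maximum degree less than $8k_0\Delta$. Let $\Delta':=8k_0\Delta$ be the degree bound on these pieces. By \cref{CliqueFriendlyAlmostEmbeddable}, each such piece admits clique-friendly $(19k_0+11,\, 24k_0^2\Delta')$-partitions; writing $k_1:=19k_0+11$ and noting $24k_0^2\Delta' = 192k_0^3\Delta$, the whole class of $k_0$-almost$^{\downarrow}\!$ embeddable graphs of maximum degree at most $\Delta'$ admits clique-friendly $(k_1,\, 192k_0^3\Delta)$-partitions.

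Next I would invoke \cref{CliqueFriendlyCliqueSum}: since $G$ is obtained by clique-sums of graphs drawn from a class admitting clique-friendly $(k_1, 192k_0^3\Delta)$-partitions, $G$ itself admits clique-friendly $(k_1, 192k_0^3\Delta)$-partitions, and in particular (taking the trivial clique $C=\emptyset$, say, or any clique with the trivial partitions) $G$ has a $(k_1, 192k_0^3\Delta)$-partition. This is a $(k_1, k'\Delta)$-partition with $k':=192k_0^3$. Setting $k:=\max\{k_1, k'\}$ — a constant depending only on $H$ — we get that $G$ has a $(k, k\Delta)$-partition, since the treewidth bound $k_1\le k$ and the layered-width bound $k'\Delta\le k\Delta$ are both satisfied. (A small subtlety: one should check that the degree bound used in the pieces, $8k_0\Delta$, is at least the value $\Delta$ for which we want a $(k,k\Delta)$-partition; since $k_0\ge 1$ this is immediate, so the factor $8k_0$ gets absorbed into the constant $k$.)

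One point that needs care is the exact statement of \cref{CliqueFriendlyCliqueSum} and whether the degree bound is preserved along clique-sums — it is not, in general, but we do not need it to be: we only use the degree bound to get a clique-friendly partition for each piece (via \cref{CliqueFriendlyAlmostEmbeddable}), after which \cref{CliqueFriendlyCliqueSum} propagates the partition parameters $(k_1,\ell)$ through the clique-sum operation without reference to degrees. So the logical order is: bound the degrees of the pieces (\cref{GMST3}), get layered partitions of the pieces using that degree bound (\cref{CliqueFriendlyAlmostEmbeddable}), then forget the degrees and clique-sum the partitions together (\cref{CliqueFriendlyCliqueSum}). The main obstacle, such as it is, is purely bookkeeping: tracking how the four nested constants ($k$ from \cref{GMST2}, the $8k$ degree blow-up, the $19k+11$ and $24k^2\Delta$ from \cref{CliqueFriendlyAlmostEmbeddable}) compose, and verifying that the final layered width is genuinely linear in $\Delta$ with a coefficient depending only on $H$ — which it is, since every transformation above is linear in $\Delta$.
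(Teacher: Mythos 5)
Your proposal is correct and follows essentially the same route as the paper's own proof: apply \cref{GMST3} to decompose $G$ into clique-sums of $k_0$-almost$^{\downarrow}\!$ embeddable pieces of maximum degree less than $8k_0\Delta$, use \cref{CliqueFriendlyAlmostEmbeddable} to obtain clique-friendly $(19k_0+11,192k_0^3\Delta)$-partitions of the pieces, and propagate through the clique-sums via \cref{CliqueFriendlyCliqueSum}, taking $k:=\max\{19k_0+11,192k_0^3\}$. Even the constants coincide with the paper's, so there is nothing to add.
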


\begin{proof}
Let $G$ be an $H$-minor-free graph with maximum degree $\Delta$. By \cref{GMST3}, there is a constant $k_0$ (depending only on $H$) such that $G$ can be obtained by clique-sums of $k_0$-almost$^{\downarrow}\!$ embeddable graphs with maximum degree at most $8k_0\Delta$. By \cref{CliqueFriendlyAlmostEmbeddable}, each such graph admits clique-friendly $(19k_0+11,24k_0^2\cdot 8k_0\Delta)$-partitions. It follows from \cref{CliqueFriendlyCliqueSum} that $G$ also admits clique-friendly $(19k_0+11,192k_0^3\Delta )$-partitions. The result follows where  $k:=\max\{19k_0+11,192k_0^3\}$. 
\end{proof}

With these tools, we are now ready to prove the main result of this section.

\begin{thm}\label{3colMinor}
For every fixed graph\/ $H$, every\/ $H$-minor-free graph\/ $G$ with maximum degree\/ $\Delta\in\N$ is\/ $3$-colourable with clustering\/ $O(\Delta^5)$.
\end{thm}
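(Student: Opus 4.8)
The plan is to chain together the layered-partition machinery developed in this section with the colouring machinery of \cref{Planar}; once those pieces are in place there is essentially nothing left to do.

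First I would invoke \cref{MinorFreeDeltaLayeredPartition}: since $G$ is $H$-minor-free and $H$ is fixed, there is a constant $k=k(H)\in\N$ (independent of $\Delta$) such that $G$ has a $(k,k\Delta)$-partition $\PP$. In particular the quotient $G/\PP$ has treewidth at most $k$ and $\PP$ has layered width at most $k\Delta$ with respect to some layering of $G$. Next I would apply \cref{PartitionLayeredTreewidth} to this $(k,k\Delta)$-partition, which shows that $G$ has layered treewidth at most $(k+1)\cdot k\Delta = O(\Delta)$, where the implied constant depends only on $H$.

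Finally I would feed this bound into \cref{3ColourLayeredTreewidth} (equivalently, directly into \cref{3Colour} after noting that in a graph of layered treewidth $m$ the union of any $11$ consecutive layers induces a subgraph of treewidth less than $11m$). Since $G$ has maximum degree $\Delta$ and layered treewidth $O(\Delta)$, \cref{3ColourLayeredTreewidth} gives a $3$-colouring of $G$ with clustering $O\bigl((O(\Delta))^3\,\Delta^2\bigr)=O(\Delta^5)$, with the constant depending only on $H$. This is exactly the claimed bound.

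There is no genuine obstacle at this stage: all the difficulty has been pushed into \cref{MinorFreeDeltaLayeredPartition} (whose proof rests on the Graph Minor Structure Theorem in the form of \cref{GMST3}, on the bounded-degree handling of apex vertices in \cref{DropApices}, and on the clique-sum lemmas \cref{CliqueFriendly,CliqueFriendlyCliqueSum}) and into the Key Lemma \cref{3Colour}. The only thing to watch here is the arithmetic of the constants — making sure that the $k$ coming from \cref{MinorFreeDeltaLayeredPartition} truly depends only on $H$, so that the layered treewidth is linear in $\Delta$ and the exponent in the final clustering bound is $5$ rather than something larger — but this is routine bookkeeping.
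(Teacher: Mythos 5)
Your proposal is correct and is essentially identical to the paper's own proof: invoke \cref{MinorFreeDeltaLayeredPartition} to obtain a $(k,k\Delta)$-partition, convert it to layered treewidth $(k+1)k\Delta$ via \cref{PartitionLayeredTreewidth}, and apply \cref{3ColourLayeredTreewidth} to get clustering $O(k^6\Delta^5)=O(\Delta^5)$. Nothing is missing.
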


\begin{proof}
Let $G$ be an $H$-minor-free graph with maximum degree $\Delta$. By \cref{MinorFreeDeltaLayeredPartition}, for some constant $k$ (depending only on $H$), $G$ has a $(k,k\Delta)$-partition. \cref{PartitionLayeredTreewidth} implies that $G$ has layered treewidth at most $(k+1)k\Delta$. By \cref{3ColourLayeredTreewidth}, $G$ has a 3-colouring with clustering $O(k^6\Delta^5)$.
\end{proof}

\subsection{Strong Products}

Some of the above structural results can be interpreted in terms of products. The \defn{strong product} of graphs $A$ and $B$, denoted by $A\boxtimes B$, is the graph with vertex set $V(A)\times V(B)$, where distinct vertices $(v,x),(w,y)\in V(A)\times V(B)$ are adjacent if:
\begin{compactitem}
\item $v=w$ and $xy\in E(B)$, or
\item $x=y$ and $vw\in E(A)$, or
\item $vw\in E(A)$ and $xy\in E(B)$.
\end{compactitem}

\cref{2Colour} was proved using the following result by an anonymous referee of the paper by \citet*{DO95} (refined in \citep{Wood09}).

\begin{lem}[\citep{DO95,Wood09}]
\label{DegreeTreewidthStructure}
Every graph with maximum degree\/ $\Delta\in\N$ and treewidth less than\/ $k\in\N$ is a subgraph of\/ $T\boxtimes K_{20k\Delta}$ for some tree\/ $T$.
\end{lem}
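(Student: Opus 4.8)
The plan is to recast the conclusion combinatorially. A graph $G$ is a subgraph of $T\boxtimes K_c$ for some tree $T$ if and only if $G$ has a \emph{tree-partition} of width at most $c$: a partition $(A_x:x\in V(T))$ of $V(G)$ with $|A_x|\le c$ for every node $x$, such that every edge of $G$ has both endpoints in a single part $A_x$, or has its endpoints in two parts $A_x,A_y$ with $xy\in E(T)$. So it suffices to build, for a graph $G$ of treewidth less than $k$ and maximum degree $\Delta$, a tree-partition of width at most $20k\Delta$; this quantity is the \emph{tree-partition-width} of $G$. I may assume $G$ is connected and work with a tree decomposition $(B_x:x\in V(T))$ of $G$ of width less than $k$ (so $|B_x|\le k$) chosen to minimise $\sum_x|B_x|$; exactly as in the proof of \cref{GMST2}, this choice ensures that no bag is contained in an adjacent bag and that every vertex of an adhesion $B_x\cap B_y$ has a neighbour on each side of the edge $xy$ of $T$.

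First I would root $T$ and recurse down it. For a node $x$ with parent $p$, write $T_x$ for the subtree rooted at $x$, put $G_x:=G\bigl[\bigcup_{y\in V(T_x)}B_y\bigr]$, and let $S_x:=B_x\cap B_p$ (with $S_{\mathrm{root}}:=\emptyset$), so that $|S_x|\le k$ and $S_x$ separates $G_x$ from the rest of $G$. The recursive statement to prove would be: $G_x$ has a rooted tree-partition of width at most $20k\Delta$ whose root part contains $B_x$ together with every vertex of $G_x$ adjacent to $S_x$, and in which every edge incident to $S_x$ is captured. The point of carrying the extra set into the root part is that $|S_x|\le k$ and $G$ has maximum degree $\Delta$, so $S_x$ has at most $k\Delta$ neighbours; hence this enlarged root part still has at most $k(\Delta+1)$ vertices, and the remaining slack up to $20k\Delta$ is what will absorb the complications below. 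Given the tree-partitions for the children $c_1,\dots,c_t$ of $x$ (whose separators $S_{c_i}$ all lie inside $B_x$), one assembles the tree-partition of $G_x$ by placing an appropriate part at the root and attaching the $t$ child tree-partitions beneath it; the delicate choices are which vertices of $B_x$ and of each $N_{G_x}(S_{c_i})$ go at the root versus into the child branches.

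The hard part will be guaranteeing that every edge is captured by the assembled tree-partition while the parts do not blow up, and two phenomena make this subtle. A node of $T$ of large degree forces many child branches to attach at the same part, and an edge running from that part into the interior of a branch would skip a tree edge unless the branch's root part has been pre-loaded with the relevant neighbours — which is exactly why the recursive invariant pushes $N_{G_x}(S_x)$ up to the root. A long chain of degree-two nodes of $T$ through which a single vertex $v$ persists in every bag (as already happens for the path-like tree decomposition of a long cycle, where the naive top-down assignment strands $v$ at one end of the chain) forces one to ``fold'' such a chain rather than sweep it from a fixed end, so that $v$ can sit at a part from which it reaches both of its far-apart neighbours across single tree edges. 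Reconciling these requirements — choosing the root parts along each chain and at each branching node so that every edge lands inside a part or across a tree edge, the quotient remains a tree, and no part exceeds roughly $k$ bags' worth of vertices inflated by the at most $\Delta$ neighbours a persisting vertex can have — is the crux, and it is where the loose constant $20$ is spent; the surrounding reductions are routine bookkeeping.
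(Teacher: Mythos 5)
The paper does not prove this lemma at all---it is quoted from \citep{DO95,Wood09}---so the only question is whether your argument stands on its own, and it does not: it is a correct framing followed by an explicit deferral of the entire proof. Your opening reduction is right (being a subgraph of $T\boxtimes K_c$ is equivalent to having a tree-partition of width at most $c$, so the lemma is a bound on tree-partition-width), and you correctly name the two obstructions to a naive top-down sweep of a rooted tree decomposition: many branches attaching at one node, and a vertex persisting through a long chain of bags. But the last paragraph then concedes that reconciling these ``is the crux, and it is where the loose constant $20$ is spent.'' That crux \emph{is} the lemma; everything you have actually carried out is the routine part.

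Moreover, the recursive invariant you do write down does not close. You require the root part of the tree-partition of $G_x$ to contain $B_x$ together with $N_{G_x}(S_x)$, and you attach the children's tree-partitions beneath it. But the child invariant forces $B_{c_i}\supseteq S_{c_i}\subseteq B_x$ into the child's root part as well as the parent's, violating disjointness; and a vertex $w$ of $G_{c_i}$ deep in the branch that is adjacent to some $v\in S_x\cap S_{c_i}$ is required by the parent's invariant to sit in the parent's root part while the child's invariant has already placed it in the child's root part. Repairing this---deciding which vertices are promoted, keeping the promoted sets bounded when they accumulate along a chain through which a bag persists, and still capturing every edge---is exactly the content of the known proof, which proceeds quite differently: a strengthened statement proved by induction on $|V(G)|$, in which a prescribed vertex set of size $O(k\Delta)$ must occupy the root part, a separator extracted from the tree decomposition is added to it, and one recurses on the components of the remainder with the (degree-bounded) neighbourhood of the root part as the new prescribed set. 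Without an argument at that level of detail, the constant $20k\Delta$---indeed any constant---is unjustified.
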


\cref{2Colour} follows from \cref{DegreeTreewidthStructure} by first properly 2-colouring $T$ and then colouring each vertex of the graph by the colour of the corresponding vertex of $T$.

The next observation by \citet*{DJMMUW20} follows immediately from the definitions.

\begin{obs}[\citep{DJMMUW20}]
\label{PartitionProduct}
A graph\/ $G$ has an\/ $H$-partition of layered width at most\/ $\ell\in\N$ if and only if\/ $G$ is a subgraph of\/ $H \boxtimes P \boxtimes K_\ell$ for some path\/ $P$.
\end{obs}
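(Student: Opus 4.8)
The plan is to prove both directions of the equivalence by directly unwinding the definitions, since there is essentially nothing to compute; the only thing to manage carefully is the bookkeeping of which of the three coordinates of $H\boxtimes P\boxtimes K_\ell$ encodes which piece of data. The guiding observation, which I would state up front, is that in $K_\ell$ every two distinct vertices are adjacent, so the $K_\ell$-coordinate imposes no constraint on adjacency but allows up to $\ell$ vertices of $G$ to share a common pair of first two coordinates. Consequently a vertex of $H\boxtimes P\boxtimes K_\ell$ should be thought of as ``a vertex of $H$, together with a layer index, together with one of $\ell$ tokens''.

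For the forward direction I would start from an $H$-partition $(A_x:x\in V(H))$ of $G$ of layered width at most $\ell$, witnessed by a layering $(V_0,V_1,\dots)$, and for each vertex $v$ record the unique $x(v)$ with $v\in A_{x(v)}$ and the unique $i(v)$ with $v\in V_{i(v)}$. Because $|A_x\cap V_i|\le\ell$ for all $x$ and $i$, I can injectively label the vertices inside each cell $A_x\cap V_i$ by elements of $\{1,\dots,\ell\}$; call this label $t(v)$. Then $v\mapsto(x(v),i(v),t(v))$ is injective, and for every edge $vw$ of $G$ the $H$-partition property gives that $x(v)$ and $x(w)$ are equal or adjacent in $H$, the layering property gives $|i(v)-i(w)|\le1$, and injectivity gives that the two images are distinct; so the images are adjacent in $H\boxtimes P\boxtimes K_\ell$, where $P$ is the path on $\NN$. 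This exhibits $G$ as a subgraph.

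For the converse I would identify $V(G)$ with a subset of $V(H)\times V(P)\times\{1,\dots,\ell\}$, order the vertices of $P$ along the path as $p_0,p_1,\dots$, and define $A_x$ to be the vertices of $G$ whose first coordinate is $x$ and $V_i$ to be those whose second coordinate is $p_i$ (discarding any empty sets to respect the convention that parts are non-empty). Every edge of $G$ is an edge of $H\boxtimes P\boxtimes K_\ell$, hence its endpoints have first coordinates that are equal or adjacent in $H$ and second coordinates equal or adjacent in $P$; this is exactly what is needed to conclude that $(A_x)$ is an $H$-partition and $(V_0,V_1,\dots)$ is a layering of $G$. Finally, each cell $A_x\cap V_i$ consists of vertices agreeing in their first two coordinates and so differing only in the third, giving $|A_x\cap V_i|\le\ell$ and hence layered width at most $\ell$.

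There is no real obstacle here — the statement is genuinely immediate from the definitions, which is why it is phrased as an observation. The only places I would be slightly careful are re-indexing $P$ so the layers are numbered by $\NN$ rather than by an arbitrary path (trivial, since a path carries a linear order, up to reversal) and the routine side conditions about empty parts and layers.
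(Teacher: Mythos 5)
Your proof is correct and is exactly the ``follows immediately from the definitions'' argument the paper has in mind for \cref{PartitionProduct} (the paper itself gives no proof, citing \citep{DJMMUW20}): encode each vertex by its part, its layer, and an injective label within the cell $A_x\cap V_i$ for one direction, and read off the partition and layering from the first two coordinates for the other. The bookkeeping points you flag (re-indexing the path, empty parts/layers) are indeed the only things to check, and you handle them correctly.
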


\citet*{DJMMUW20} also showed that if one does not care about the exact treewidth bound, then it suffices to consider partitions with layered width 1.

\begin{obs}[\citep{DJMMUW20}]
\label{MakeWidth1}
If a graph\/ $G\subseteq H\boxtimes P\boxtimes K_\ell$ for some graph\/ $H$ of treewidth at most\/ $k$ and for some path\/ $P$, then\/ $G\subseteq H' \boxtimes P$ for some graph\/ $H'$ of treewidth at most\/ $(k+1)\ell-1$.
\end{obs}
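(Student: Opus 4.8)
The plan is to build the graph $H'$ directly from $H$ by making $\ell$ copies of each vertex of $H$, one copy for each vertex of $K_\ell$, and to verify the two claims (treewidth bound and containment) from the definition of the strong product. Concretely, write $V(K_\ell)=\{1,\dots,\ell\}$ and let $H'$ be the graph with vertex set $V(H)\times V(K_\ell)$, where $(u,a)(v,b)\in E(H')$ whenever $u=v$ (with $a\neq b$) or $uv\in E(H)$ (for any $a,b$). Equivalently, $H'=H\boxtimes K_\ell$ with the edges of $H'$ being exactly the ``$H$-direction'' edges; note $H'$ contains $H\boxtimes P\boxtimes K_\ell$ as a spanning-plus subgraph after the right identification, but it is cleaner to argue containment for $G$ directly.

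First I would verify the treewidth bound. Take a tree-decomposition $(B_x:x\in V(T))$ of $H$ of width at most $k$, so $|B_x|\le k+1$. For each bag define $B'_x:=B_x\times V(K_\ell)$, so $|B'_x|\le (k+1)\ell$. The connectedness condition for each vertex $(u,a)$ of $H'$ holds because $\{x:(u,a)\in B'_x\}=\{x:u\in B_x\}$, which is connected in $T$; and every edge of $H'$ lies in some bag because an edge $(u,a)(v,b)$ has $u=v$ or $uv\in E(H)$, and in either case $\{u,v\}\subseteq B_x$ for some $x$, whence $\{(u,a),(v,b)\}\subseteq B'_x$. Thus $(B'_x)$ is a tree-decomposition of $H'$ of width at most $(k+1)\ell-1$.

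Next I would establish $G\subseteq H'\boxtimes P$. By hypothesis fix an embedding of $G$ as a subgraph of $H\boxtimes P\boxtimes K_\ell$, identifying each vertex $v$ of $G$ with a triple $(\alpha(v),\pi(v),\kappa(v))\in V(H)\times V(P)\times V(K_\ell)$. Define the map $\phi(v):=\big((\alpha(v),\kappa(v)),\,\pi(v)\big)\in V(H')\times V(P)$. This is injective since $\phi$ recovers the original triple. I claim $\phi$ is an embedding of $G$ into $H'\boxtimes P$. Consider an edge $vw$ of $G$; in $H\boxtimes P\boxtimes K_\ell$ this means that for each of the three coordinates, the entries are equal or adjacent, and at least one is adjacent. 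In the product $H'\boxtimes P$ an edge between $\phi(v)$ and $\phi(w)$ requires: (i) the $P$-coordinates $\pi(v),\pi(w)$ equal or adjacent in $P$, which holds; and (ii) the $H'$-coordinates $(\alpha(v),\kappa(v)),(\alpha(w),\kappa(w))$ equal or adjacent in $H'$, with adjacency-or-equality in at least one of (i),(ii) — but since $\phi(v)\ne\phi(w)$ and both (i) and (ii) allow equality, it suffices that the pair is adjacent-or-equal in each of the two coordinates of $H'\boxtimes P$. For (ii), if $\alpha(v)=\alpha(w)$ and $\kappa(v)=\kappa(w)$ then the $H'$-coordinates are equal; if $\alpha(v)=\alpha(w)$ and $\kappa(v)\ne\kappa(w)$ then $(\alpha(v),\kappa(v))(\alpha(w),\kappa(w))\in E(H')$ by the ``$u=v$, $a\ne b$'' rule; and if $\alpha(v)\alpha(w)\in E(H)$ then the same pair is in $E(H')$ by the ``$uv\in E(H)$'' rule regardless of the $K_\ell$-coordinates. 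In every case the $H'$-coordinates are equal or adjacent, so $\phi(v)\phi(w)\in E(H'\boxtimes P)$ as required.

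The only point needing care is that I do not need the ``at least one coordinate is a genuine edge'' part of the strong-product definition on the $H'\boxtimes P$ side, because $\phi$ is injective and hence $\phi(v)\ne\phi(w)$ already guarantees the two images differ in some coordinate, which — combined with ``equal or adjacent in each coordinate'' — is exactly the adjacency condition for $\boxtimes$. I expect this bookkeeping about the three cases for the $H$- and $K_\ell$-coordinates to be the main (though routine) obstacle; everything else is immediate from the definitions.
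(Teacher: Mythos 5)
Your proof is correct and is essentially the standard argument behind this observation (which the paper only cites from \citet{DJMMUW20} without proof): take $H'=H\boxtimes K_\ell$, bound its treewidth by replacing each bag $B_x$ with $B_x\times V(K_\ell)$, and use associativity of the strong product to get $G\subseteq H\boxtimes P\boxtimes K_\ell\cong (H\boxtimes K_\ell)\boxtimes P$. The only blemish is the throwaway remark that $H'$ ``contains $H\boxtimes P\boxtimes K_\ell$ as a spanning-plus subgraph,'' which is inaccurate but unused, since your direct verification of the embedding $\phi$ is what carries the argument.
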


By these two observations, \cref{MinorFreeDeltaLayeredPartition} can be restated as follows:

\begin{thm}
\label{MinorFreeDegreeStructure}
For every fixed graph\/ $X$, every\/ $X$-minor-free graph with maximum degree\/ $\Delta\in\N$ is a subgraph of\/ $H\boxtimes P$ for some graph\/ $H$ of treewidth\/ $O(\Delta)$ and for some path\/ $P$.
\end{thm}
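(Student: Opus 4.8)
The plan is to derive this statement from \cref{MinorFreeDeltaLayeredPartition} together with the two product reformulations \cref{PartitionProduct} and \cref{MakeWidth1}, exactly as flagged in the paragraph preceding the statement. Let $G$ be an $X$-minor-free graph with maximum degree $\Delta$. First I would apply \cref{MinorFreeDeltaLayeredPartition} to obtain a constant $k$, depending only on $X$, such that $G$ has a $(k,k\Delta)$-partition $\PP$; by definition this means that the quotient $G/\PP$ has treewidth at most $k$ and that $\PP$ has layered width at most $k\Delta$.

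Next I would translate this into a product statement. Setting $H_0:=G/\PP$ and $\ell:=k\Delta$, the partition $\PP$ is an $H_0$-partition of $G$ of layered width at most $\ell$, so \cref{PartitionProduct} gives that $G$ is a subgraph of $H_0\boxtimes P\boxtimes K_{k\Delta}$ for some path $P$, where $H_0$ has treewidth at most the constant $k$. Finally I would absorb the clique factor into the treewidth using \cref{MakeWidth1}: since $G\subseteq H_0\boxtimes P\boxtimes K_{k\Delta}$ and $H_0$ has treewidth at most $k$, there is a graph $H'$ of treewidth at most $(k+1)(k\Delta)-1$ with $G\subseteq H'\boxtimes P$. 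As $k$ depends only on $X$, this bound is $O(\Delta)$, which is precisely the claimed conclusion.

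There is essentially no obstacle in this last argument --- it is a mechanical unpacking of definitions --- so the ``hard part'' of the theorem lies entirely upstream in \cref{MinorFreeDeltaLayeredPartition}, which rests on the Graph Minor Structure Theorem (via \cref{GMST,GMST2,GMST3}), on \cref{AlmostEmbeddableStructure} for the surface-embeddable pieces, and on the clique-sum bookkeeping in \cref{DropApices,CliqueFriendly,CliqueFriendlyCliqueSum,CliqueFriendlyAlmostEmbeddable}. The only subtlety to keep an eye on is confirming that the constant $k$ in \cref{MinorFreeDeltaLayeredPartition} does not secretly depend on $\Delta$; it does not, since that theorem's proof feeds $8k_0\Delta$ into the degree slot of \cref{CliqueFriendlyAlmostEmbeddable} while $k_0$ itself comes only from the excluded minor, so the final treewidth really is linear in $\Delta$ with the implicit constant a function of $X$ alone.
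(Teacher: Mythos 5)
Your proposal is correct and follows exactly the route the paper intends: \cref{MinorFreeDeltaLayeredPartition} gives a $(k,k\Delta)$-partition with $k$ depending only on $X$, \cref{PartitionProduct} converts it to $G\subseteq H_0\boxtimes P\boxtimes K_{k\Delta}$, and \cref{MakeWidth1} absorbs the clique factor to give treewidth at most $(k+1)k\Delta-1=O(\Delta)$. This is precisely the paper's one-line derivation ("By these two observations, \cref{MinorFreeDeltaLayeredPartition} can be restated as follows"), with the dependence of the constant on $X$ alone correctly checked.
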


It is worth highlighting the similarity of \cref{DegreeTreewidthStructure,MinorFreeDegreeStructure}.
\cref{DegreeTreewidthStructure} says that graphs of bounded treewidth and bounded degree are subgraphs of the product of a tree and a complete graph of bounded size, whereas \cref{MinorFreeDegreeStructure} says that bounded-degree graphs excluding a fixed minor are subgraphs of the product of a bounded treewidth graph and a path.

\section{Open Problem}

We conclude with a natural open problem that arises from this work. Are planar graphs with maximum degree $\Delta$ 3-colourable with clustering $O(\Delta)$? A construction of \citet*{KMRV97} shows a lower bound of $\Omega(\Delta^{1/3})$, while a slightly different construction by \citet*{EJ14} shows $\Omega(\Delta^{1/2})$.

\subsection*{Acknowledgements}
 
This research was initiated at the Graph Theory Workshop held at Bellairs Research Institute in April 2019. Thanks to the other workshop participants for creating a productive working atmosphere.

  \let\oldthebibliography=\thebibliography
  \let\endoldthebibliography=\endthebibliography
  \renewenvironment{thebibliography}[1]{%
    \begin{oldthebibliography}{#1}%
      \setlength{\parskip}{0.0ex}%
      \setlength{\itemsep}{0.0ex}%
  }{\end{oldthebibliography}}


\def\soft#1{\leavevmode\setbox0=\hbox{h}\dimen7=\ht0\advance \dimen7
	by-1ex\relax\if t#1\relax\rlap{\raise.6\dimen7
		\hbox{\kern.3ex\char'47}}#1\relax\else\if T#1\relax
	\rlap{\raise.5\dimen7\hbox{\kern1.3ex\char'47}}#1\relax \else\if
	d#1\relax\rlap{\raise.5\dimen7\hbox{\kern.9ex \char'47}}#1\relax\else\if
	D#1\relax\rlap{\raise.5\dimen7 \hbox{\kern1.4ex\char'47}}#1\relax\else\if
	l#1\relax \rlap{\raise.5\dimen7\hbox{\kern.4ex\char'47}}#1\relax \else\if
	L#1\relax\rlap{\raise.5\dimen7\hbox{\kern.7ex
			\char'47}}#1\relax\else\message{accent \string\soft \space #1 not
		defined!}#1\relax\fi\fi\fi\fi\fi\fi}

\appendix\section{Alternative proof of the key lemma}

This appendix was added after the paper was accepted to \emph{Combinatorics, Probability \& Computing}. Here we give a slightly simpler and slightly stronger proof of \cref{3Colour}, where we only require seven consecutive layers to have bounded treewidth, and in addition one of the three colour classes contains components of size $O(k\Delta)$, instead of $O(k^3\Delta^2)$.
      
\begin{lem}
\label{3ColourA}
Let\/ $G$ be a graph with maximum degree\/ $\Delta\in\N$.
Let\/ $(V_0,V_1,\dots)$ be a layering of\/ $G$ such that\/ $G[\bigcup_{j=0}^{6}V_{i+j}]$ has treewidth less than\/ $k\in\N$ for all\/ $i\in\NN$. Then\/ $G$ is\/ $3$-colourable with clustering\/ $8000 k^3\Delta^2$.
\end{lem}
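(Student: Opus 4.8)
The plan is to re-run the proof of \cref{3Colour} with a shorter period. Recall how that proof goes: the layers are grouped into blocks of five consecutive layers separated by single buffer layers; each block is $2$-coloured with clustering $20k\Delta$ via \cref{2Colour}; the colour classes of consecutive blocks are then stitched into \emph{fat layers} $Y_i$, and the parts near the buffers are repaired using a second application of \cref{2Colour} to auxiliary graphs $Z_i$ obtained by contracting the (small) monochromatic components of the block colourings. The constraint that makes everything fit is that each $Z_i$ spans $11$ consecutive layers, which is why \cref{3Colour} asks for $11$ consecutive layers of bounded treewidth. Since here we only have this for $7$ consecutive layers, I would shrink the blocks from five layers to three, so that the period drops from $6$ to $4$ and each $Y_i$ (hence each $Z_i$) spans exactly the $7$ layers $V_{4i},\dots,V_{4i+6}$.

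Concretely: after relabelling away a bounded initial segment, set $G_i := G[V_{4i}\cup V_{4i+1}\cup V_{4i+2}]$ and take the buffers to be the layers $V_{4i+3}$. Each $G_i$ spans three consecutive layers, so it has treewidth less than $k$ and, by \cref{2Colour}, a $2$-colouring $c_i$ with clustering $20k\Delta$, which I would take to use the cyclic colours $\overline{i}$ and $\overline{i+1}$ (with $\overline{i}:=i\bmod 3$). As in \cref{3Colour}, let $Y_i$ consist of the $\overline{i}$-coloured part of $G_i$, the buffer $V_{4i+3}$, and the $\overline{i-1}$-coloured part of $G_{i+1}$; then $\{Y_i:i\in\NN\}$ partitions $V(G)$ and $(Y_0,Y_1,\dots)$ is a layering of $G$. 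Designate a set $A_i$ (resp.\ $B_i$) of vertices in $G_i$ (resp.\ $G_{i+1}$) to be contracted when forming $Z_i := G[Y_i]$ with the components of $G[A_i]$ and $G[B_i]$ contracted; the point is that each component of $G[A_i]$ or $G[B_i]$ lies in a monochromatic component of a block colouring, hence has at most $20k\Delta$ vertices, and $Z_i$ — being a minor of $G[\bigcup_{j=0}^{6}V_{4i+j}]$ — has treewidth less than $k$ and (provided an uncontracted \emph{guard} layer is kept between each contracted part and the buffer) maximum degree at most $20k\Delta$. So \cref{2Colour} gives $Z_i$ a $2$-colouring $c_i'$ with clustering $400k^2\Delta$. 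The final colouring $c$ keeps $c_i$ away from the buffers, uses $c_i'$ on the buffer zones, and re-expands the contracted pieces.

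For the clustering bound I would argue exactly as in \cref{3Colour}: if a monochromatic component $M$ of $c$ contained an edge between $Y_{i-1}$ and $Y_i$ it would be coloured with the unique colour common to those two fat layers, but a suitable layer in between is free of that colour and separates $Y_{i-1}$ from $Y_i$ in $G$; since $M$ is connected this is impossible, so $M$ lies inside a single $Y_i$. Then $M$ is either contained in a single monochromatic component of some $c_i$ (so $|V(M)|\le 20k\Delta$), or it is obtained from a monochromatic component of some $Z_i$ under $c_i'$ by re-expanding the contracted vertices (so $|V(M)|\le 400k^2\Delta\cdot 20k\Delta = 8000k^3\Delta^2$), as required. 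The same bookkeeping also shows, after a small tweak of the colour assignment, that one colour class consists only of monochromatic components of the $c_i$, giving clustering $O(k\Delta)$ for that class.

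The hard part is not the idea but the arithmetic of the shorter window: with only three layers per block one has to fit, simultaneously, (i) a layer of each block that is untouched by either adjacent buffer repair, so that it can serve as the separator in the confinement argument; (ii) a guard layer between each contracted set $A_i$ or $B_i$ and the neighbouring buffer, without which a contracted vertex could acquire degree $\Theta(k\Delta^2)$ and the clustering would blow up to $\Theta(k^3\Delta^3)$; and (iii) the requirement that $Y_i$ span at most seven layers. Reconciling these — in particular choosing exactly which colour classes of which layers get recoloured — is where the care is needed, and may call for a slightly less symmetric design than a direct rescaling of the proof of \cref{3Colour}.
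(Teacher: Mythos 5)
Your plan --- shrink the blocks of the proof of \cref{3Colour} from five layers to three so that each fat layer $Y_i$ spans seven layers --- runs into an obstruction that you flag but do not resolve, and it is in fact fatal for any direct rescaling. In the proof of \cref{3Colour}, each colour class of a block $G_i$ needs \emph{two} recoloured layers at the end facing its buffer (the guard layer, coloured by $c_i'$, and the outer layer of the contracted set, recoloured via $v_X$), and the confinement argument needs a layer of the block on which \emph{neither} colour class is touched by any auxiliary colouring (this is $V_{6i+2}$, which keeps its colours $\overline{i},\overline{i+1}$ and hence is free of $\overline{i-1}$). The two recoloured two-layer regions sit at opposite ends of the block, so the block must have at least $2+1+2=5$ layers. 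With three layers, the middle layer $V_{4i+1}$ necessarily has both its $\overline{i}$-part (outer $A_i$, recoloured by $c_i'$) and its $\overline{i+1}$-part (outer $B_{i-1}$, recoloured by $c_{i-1}'$) possibly coloured $\overline{i-1}$; an edge of $G$ between two such vertices yields a monochromatic $\overline{i-1}$-component meeting both $Y_{i-1}$ and $Y_i$, and since $c_{i-1}'$ and $c_i'$ are independent, such a component can chain together unboundedly many pieces from the two colourings. Dropping a guard to free up a layer is no better: $v_X$ would then see the buffer directly, its degree could reach $\Delta|V(X)|$, and the final bound degrades to $O(k^3\Delta^3)$, exactly as you note. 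So the ``slightly less symmetric design'' you defer to does not exist within this template, and the proposal is incomplete at its crucial step.

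The paper's actual proof of \cref{3ColourA} abandons the block-and-buffer template altogether. It $2$-colours, blue/yellow, the single graph $H$ induced by all layers $V_j$ with $j\not\equiv 0\pmod 8$ (a disjoint union of strips of at most seven consecutive layers, hence of treewidth less than $k$), keeps blue as a final colour class with components of size at most $20k\Delta$, and hard-codes the yellow vertices of $V_{8i+4}$ red and those of $V_{8i\pm 3}$ green. This forces every red/green component with more than $20k\Delta$ vertices into a single window $V_{8i-3}\cup\dots\cup V_{8i+3}$, and each window is then repaired by one application of \cref{2Colour} to the graph obtained by contracting the small yellow components near the window boundary. To salvage your approach you would need an idea of this kind --- one permanent ``small'' colour plus hard-coded separating layers --- rather than a rebalancing of the layer counts.
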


\begin{proof}
No attempt is made to improve the constant 8000. 
We may assume (by renaming the layers) that $V_0=V_1=V_2=V_3=V_4=\emptyset$. 

As illustrated in \cref{AppendixProofIllustration}, let $H$ be the subgraph of $G$ induced by $\bigcup\{V_i:i\in\N,i\not\equiv 0 \pmod 8\}$. Note that $H$ is the disjoint union of subgraphs of $G$, each induced by at most seven consecutive layers $V_i$. Thus $H$ has treewidth less than $k$. As a subgraph of $G$, $H$ has maximum degree $\Delta$. By \cref{2Colour}, $H$ has a 2-colouring with clustering $20k\Delta$, say with colours blue and yellow. Let $Y$ be the set of yellow vertices in $H$.

\begin{figure}[!b]
	\centering\includegraphics[scale=1.2]{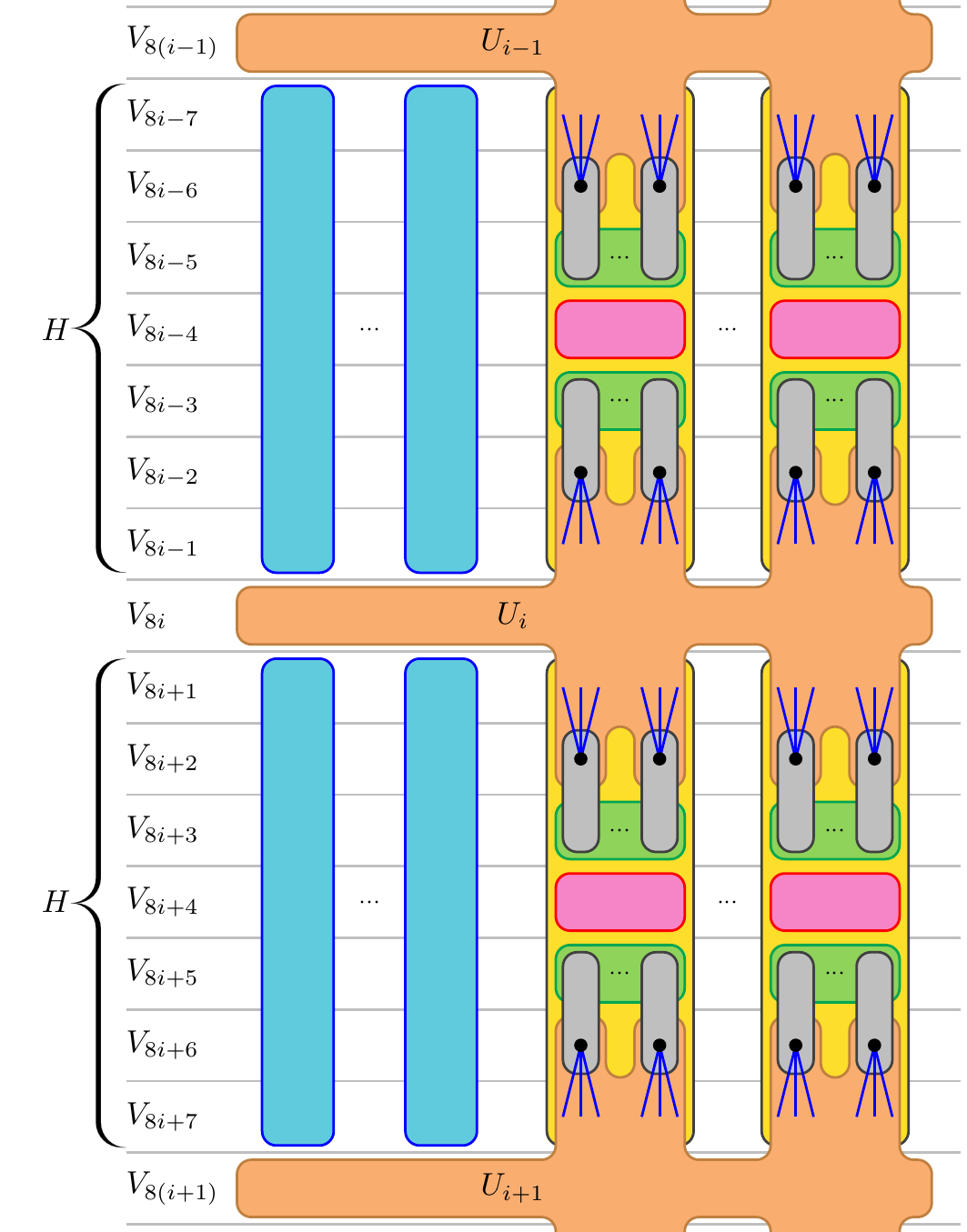}
	\caption{\label{AppendixProofIllustration} Proof of \cref{3ColourA}.}
\end{figure}

We define a colouring $c$ of $G$ as follows. 

Blue vertices in $H$ are coloured blue in $c$, and no other vertex of $G$ will be coloured blue. So each blue monochromatic component has at most $20k\Delta$ vertices. Each non-blue vertex in $G$ will be coloured red or green, giving three colours in total. 

For each $i\in\N$, the vertices in $Y\cap V_{8i+4}$ are coloured red in $c$, and the vertices in $Y\cap ( V_{8i-3} \cup V_{8i+3})$ are coloured green in $c$. This implies that each monochromatic component intersecting $V_{8i+4}$ has at most $20k\Delta$ vertices, and thus each monochromatic component with greater than $20k\Delta$ vertices is red or green and lies in $V_{8i-3}\cup V_{8i-2}\cup \cdots\cup V_{8i+3}$ for some $i\in\N$.

For each $i\in\N$, let $U_i := V_{8i}\cup (Y\cap (V_{8i-2}\cup V_{8i-1}\cup V_{8i+1}\cup V_{8i+2} ))$ and $U_i^+:=U_i \cup (Y\cap (V_{8i-3}\cup V_{8i+3}))$. Let $H_i$ be the graph obtained from $G[U_i^+]$ as follows: contract each connected component $X$ of $G[Y \cap ( V_{8i-3}\cup V_{8i-2} ) ]$ or of $G[Y \cap ( V_{8i+2}\cup V_{8i+3} ) ]$ into a single vertex $v_X$. The neighbours of $v_X$ in $H_i$ lie in $V(X) \cap V_{8i-1}$ or $V(X) \cap V_{8i+1}$. So $v_X$ has degree at most $|V(X)| \leq 20k\Delta$ in $H_i$. Every other vertex $v$ in $H_i$ has degree in $H_i$ at most the degree of $v$ in $G$. So $H_i$ has maximum degree at most $20k\Delta$. Since $H_i$ is a minor of $G[\bigcup_{j=8i-3}^{8i+3}V_j]$ and since treewidth is minor-monotone, $H_i$ has treewidth less than $k$. By \cref{2Colour}, $H_i$ has a colouring $c_i$ with with clustering $20k\cdot 20 k \Delta=400 k^2 \Delta$, say with colours red and green. 

We now define the colouring $c$ of the vertices of $U_i$: For each component $X$ of $G[Y \cap ( V_{8i-3}\cup V_{8i-2}) ]$, assign the colour of $v_X$ in $c_i$ to each vertex in $V(X) \cap V_{8i-2}$. Similarly, for each component $X$ of $G[Y \cap ( V_{8i+3}\cup V_{8i+2} ) ]$, assign the colour of $v_X$ in $c_i$ to each vertex in $V(X) \cap V_{8i+2}$. For every other vertex $v$ of $U_i$, let $c(v):=c_i(v)$. This completes the definition of the colouring $c$ of $G$.

Consider a monochromatic component $M$ in the 3-colouring $c$ of $G$. As shown above, if $|V(M)|>20k\Delta$ then $M$ lies in $U_i^+$ for some $i\in\N$, and $M$ is coloured red or green. By construction, $M$ is contained in the graph obtained from some monochromatic component $M'$ of $H_i$ (with respect to $c_i$) by replacing each contracted vertex $v_X$ in $H_i$ by a subset of $V(X)$. Since $|V(M')| \leq 400 k^2\Delta$ and $|V(X)| \leq 20k\Delta$, we conclude that $|V(M)| \leq 8000 k^3\Delta^2$.
\end{proof}
\end{document}